%% Based on a TeXnicCenter-Template by Gyorgy SZEIDL.
%%%%%%%%%%%%%%%%%%%%%%%%%%%%%%%%%%%%%%%%%%%%%%%%%%%%%%%%%%%%%

%------------------------------------------------------------
%
\documentclass[11pt]{amsart}
%
%----------------------------------------------------------
% This is a sample document for the AMS LaTeX Article Class
% Class options
%        -- Point size:  8pt, 9pt, 10pt (default), 11pt, 12pt
%        -- Paper size:  letterpaper(default), a4paper
%        -- Orientation: portrait(default), landscape
%        -- Print size:  oneside, twoside(default)
%        -- Quality:     final(default), draft
%        -- Title page:  notitlepage, titlepage(default)
%        -- Start chapter on left:
%                        openright(default), openany
%        -- Columns:     onecolumn(default), twocolumn
%        -- Omit extra math features:
%                        nomath
%        -- AMSfonts:    noamsfonts
%        -- PSAMSFonts  (fewer AMSfonts sizes):
%                        psamsfonts
%        -- Equation numbering:
%                        leqno(default), reqno (equation numbers are on the right side)
%        -- Equation centering:
%                        centertags(default), tbtags
%        -- Displayed equations (centered is the default):
%                        fleqn (equations start at the same distance from the right side)
%        -- Electronic journal:
%                        e-only
%------------------------------------------------------------
% For instance the command
%          \documentclass[a4paper,12pt,reqno]{amsart}
% ensures that the paper size is a4, fonts are typeset at the size 12p
% and the equation numbers are on the right side
%
\usepackage[left=2cm,top=2cm,right=3cm,nofoot]{geometry}
\usepackage{amsmath,mathtools}%
\usepackage{amsfonts}%
\usepackage{amssymb}%
\usepackage{graphicx}
\usepackage{esint}
\usepackage{hyperref}
\usepackage{enumitem}
\usepackage{accents}
\usepackage{etoolbox}
\patchcmd{\subsection}{-.5em}{.5em}{}{}
%------------------------------------------------------------
% Theorem like environments
%
\newtheorem{theorem}{Theorem}[section]
\theoremstyle{plain}

\newtheorem{lemma}[theorem]{Lemma}

\newtheorem{remark}[theorem]{Remark}

\numberwithin{equation}{section}
\theoremstyle{plain}

%\numberwithin{claim}{theorem} %% <-- This is another alternative if you like little difference.
\usepackage{etoolbox}
\AtEndEnvironment{proof}{\setcounter{claim}{0}}

\newcommand{\Sp}{\mathbb{S}}
\newcommand{\re}{\mathbb{R}}

%--------------------------------------------------------
\begin{document}
\title[Local bifurcation diagrams and degenerate solutions ]{Local bifurcation diagrams and degenerate solutions of Yamabe-type equations}

\author{Sahid Bernab\'{e} Catal\'{a}n}
\address{Centro de Investigaci\'{o}n en Matem\'{a}ticas, CIMAT, Calle Jalisco s/n, 36023 Guanajuato, Guanajuato, M\'{e}xico}
\email{sahid.bernabe@cimat.mx}

\author{Jimmy Petean}
\address{Centro de Investigaci\'{o}n en Matem\'{a}ticas, CIMAT, Calle Jalisco s/n, 36023 Guanajuato, Guanajuato, M\'{e}xico}
\email{jimmy@cimat.mx}

\begin{abstract} We  study positive solutions of the equation  $-\Delta_g u + \lambda u = \lambda u^q$, with $\lambda >0$, $q>1$ on the round sphere $\Sp^n$. We reduce the equation to an ordinary differential equation by considering 
isoparametric functions and apply bifurcation theory. We study when the corresponding bifurcation points are transcritical. We apply this result to show the existence of degenerate solutions to the equation and to study multiplicity results for conformal constant scalar curvature metrics.

\end{abstract}
\maketitle

\section{Introduction}

On a closed Riemannian manifold $(M^n ,g)$ of dimension $n\geq 3$ we
consider the following {\it Yamabe type} equation
\begin{equation}\label{equation}
-\Delta_g u+ \lambda u= \lambda u^{q},
\end{equation}

\noindent
with $\lambda >0$ and $q>1$. Let $p_n=\frac{n+2}{n-2}$. Equation (\ref{equation}) is said to be critical if $q=p_n$, 
subcritical if $q<p_n $ and supercritical if $q>p_n$. The equation is very well-known and has been extensively studied 
in the last decades.  The critical case appears in Riemannian geometry when trying to solve the problem of
finding metrics of constant scalar curvature in a given conformal class of metrics; what is known as the Yamabe problem. 
Namely, if a metric $h$ conformal to $g$ is expressed as $h=u^{p_n -1} g$ for a positive function $u$ on $M$, then the 
scalar curvature of $h$, $s_h$, is equal to a constant $\lambda \in {\bf R}$ if and only if $u$ solves 
the Yamabe equation:

\begin{equation}\label{YamabeEquation}
- a_n \Delta_g u+ s_g u= \lambda u^{p_n},
\end{equation}

\noindent
where $a_n = \frac{4(n-1)}{n-2}$. A fundamental result obtained in several steps by H. Yamabe \cite{Yamabe}, N. Trudinger
\cite{Trudinger}, T. Aubin \cite{Aubin} and R. Schoen \cite{Schoen0} states  that there is always at least one positive solution
of the Yamabe equation, for any closed Riemannian manifold. Since the Yamabe equation is conformally invariant,  
in order to understand the space of
solutions we can therefore assume that $s_g$ is constant.  In the non-positive case Equation (\ref{YamabeEquation})
only has the constant solution and therefore we assume that $s_g$ is positive. Then Equation (\ref{YamabeEquation}) is of the form 
(\ref{equation}) with $q=p_n$ and $\lambda =\frac{s_g}{a_n}$.

\vspace{.5cm}

We will study Equation  (\ref{equation}) on the  sphere of dimension $n$ with the metric of constant sectional curvature 1, $(\Sp^n  , g_0 )$. If we consider a Riemannian product 
with the sphere, $(M \times \Sp^n  , g+g_0 )$,  we could  search
for solutions of the Yamabe equation which are functions that depend only on the sphere. Similarly if a Riemannian manifold
is the total space of a harmonic Riemannian submersion over the sphere one can consider solutions which are constant along the 
fibers. Then the Yamabe equation reduces to a subcritical equation on the sphere (see for instance \cite{Bettiol, deLima, Otoba}). Therefore
the results we will prove on positive solutions to Equation (\ref{YamabeEquation}) also give multiplicity results for conformal constant
scalar curvature metrics on the total spaces of such fibrations.

Many multiplicity results for the Yamabe type equation (\ref{equation}) have been obtained using bifurcation techniques.
In bifurcation theory one considers a family of solutions (which one considers as the family of {\it trivial}
solutions) and tries to understand the connected component of this family of trivial solutions (in the space of
all solutions). In particular one wants to understand which are the {\it bifurcation points} of the family; the
elements of the family which are accumulation points of solutions which are not in the family. 
By  {\it Local Bifurcation} we mean to understand which are the bifurcation points in the family 
and what is the space of solutions in a neighborhood of a bifurcation point. Local bifurcation for Equation (\ref{equation})
 was studied for instance in \cite{Bettiol, BettiolPiccione, deLima, Otoba, Petean}.  We mean by {\it Global Bifurcation} the study of the 
whole connected component, which is a more difficult problem.  Global bifurcation results for Equation
(\ref{equation})  have been obtained in \cite{AJJ, BettiolPiccione2, Henry}. All these results on global 
bifurcation have been obtained using a reduction of the equation to an ordinary dfferential equation, as we will do in this article.

\vspace{.8cm}

Recall that a   function $f:M\rightarrow[t_0,t_1]$, on a closed Riemannian manifold $(M,g)$,  is called {\it isoparametric} if  $|\nabla f|^2=b(f)$, $\Delta f=a(f)$,  for 
some smooth functions $a,b: [t_0 , t_1 ] \rightarrow \re$. Isoparametric functions on general Riemannian manifolds were considered by Q-M Wang in
\cite{Wang}, following the classical work by E. Cartan \cite{Cartan}, B. Segre \cite{Segre}, T. Levi-Civita \cite{Civita}  in the case of space forms.  An isoparametric function is called {\it proper} if its regular
level sets are connected.  For instance if by $x_1 : (\Sp^n  ,g_0 ) \rightarrow \re$ we mean the projection in the first 
coordinate, then $x_1$ is a proper isoparametric function while $x_1^2$ is an isoparametric fuction which is not proper.
More details on isoparametric functions can be found in \cite{Tang, Ge, m1, m2}.

Assume that there is an isoparametric function $f:M \rightarrow \re$ and look for solutions of
Equation (\ref{equation}) of the form $v=\varphi \circ f$, where $\varphi :\re \rightarrow \re_{>0}$. 
We will call functions $v$ of this form {\it $f$-invariant}. 
We will denote by $C_f^{2,\alpha}$ the space of $C^{2,\alpha}$ functions on $M$ which are $f$-invariant. 

We will consider  (\ref{equation}) as an equation on $(u,\lambda )$
and study
solutions bifurcating from the family of trivial solutions $\lambda \mapsto (1, \lambda)$. In this context one says that the family
of solutions is {\it locally rigid} at $(1,\lambda_0 )$ if there is a neighborhood $U$ of $(1,\lambda_0 )$ such that if $(u, \lambda ) \in U$
is a solution then $u=1$. In the other case, namely if there is a sequence of non-constant solutions approaching $(1,\lambda_0 )$, we
say that $(1,\lambda_0 )$ is a {\it bifurcation point} for the family.

Using the well-known theory of local 
bifurcation for simple eigenvalues, which started  with the classical work of M. G. Crandall, P. H. Rabinowitz in \cite{Crandall}, 
one can prove the following result (it is proved explicitly in this case in \cite{AJJ}):

\begin{theorem}\label{LocalBifurcationTheorem} Let $f$ be a proper isoparametric function on $(M,g)$. Let $\mu_k$, $k\geq 1$, be the (negative) eigenvalues of
$\Delta_g |_{C_f^{2,\alpha}}$. For any $q>1$ let $\lambda_k = \lambda_k (q) = \frac{- \mu_k}{q-1}$. For any $k\geq 1$ there is a continuous 
  branch
$t \mapsto (u(t), \lambda (t) )$, $t\in (-\varepsilon , \varepsilon )$, of $f$-invariant solutions of Equation (\ref{equation}) so that
$\lambda (0) =\lambda_k$, $u(0)=1$ and $u(t) \neq 1$ if $t\neq 0$.
\end{theorem}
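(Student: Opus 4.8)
The plan is to set this up as an application of the Crandall–Rabinowitz bifurcation theorem for simple eigenvalues in the Banach space $C_f^{2,\alpha}$. First I would define the nonlinear map $F:C_f^{2,\alpha}\times\mathbb{R}\to C_f^{0,\alpha}$ (more precisely, working with the closed subspace of $f$-invariant H\"older functions) by $F(u,\lambda)=-\Delta_g u+\lambda u-\lambda u^q$, noting that since $f$ is proper and isoparametric, $v=\varphi\circ f$ is a genuine $C^{2,\alpha}$ function on $M$ and the Laplacian preserves $f$-invariance, so $F$ really does map into the $f$-invariant subspace. The trivial branch is $u\equiv 1$, for which $F(1,\lambda)=0$ for all $\lambda$. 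One then checks the regularity hypotheses: $F$ is $C^2$ (indeed smooth) in $u$ since $q>1$ makes $u\mapsto u^q$ smooth near the positive constant $u=1$, and affine in $\lambda$, so all the differentiability demanded by Crandall–Rabinowitz holds.

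Next I would compute the linearization. The partial derivative $D_uF(1,\lambda)$ acting on $w$ is $-\Delta_g w+\lambda w-\lambda q w=-\Delta_g w+\lambda(1-q)w$. Its kernel in $C_f^{2,\alpha}$ is nontrivial precisely when $\lambda(q-1)=-\mu_k$ for some eigenvalue $\mu_k<0$ of $\Delta_g|_{C_f^{2,\alpha}}$, i.e.\ when $\lambda=\lambda_k$, and since $f$ is proper the restriction of $\Delta_g$ to $C_f^{2,\alpha}$ has simple eigenvalues (each eigenvalue of the ODE Sturm–Liouville problem obtained from the isoparametric reduction is simple), so $\ker D_uF(1,\lambda_k)$ is one-dimensional, spanned by $w_k=\psi_k\circ f$ where $\psi_k$ is the corresponding ODE eigenfunction. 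The cokernel is also one-dimensional by self-adjointness/elliptic Fredholm theory, and the range is the $L^2$-orthogonal complement of $w_k$. The transversality condition requires $D_\lambda D_uF(1,\lambda_k)[w_k]=(1-q)w_k$ to not lie in the range of $D_uF(1,\lambda_k)$; since $(1-q)\neq 0$ and $w_k\notin \operatorname{range}$ (it is not orthogonal to itself), this holds. The Crandall–Rabinowitz theorem then produces the asserted branch $t\mapsto(u(t),\lambda(t))$ with $u(0)=1$, $\lambda(0)=\lambda_k$, $u(t)=1+tw_k+o(t)$, hence $u(t)\neq 1$ for small $t\neq 0$.

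The one genuine point needing care — and the step I expect to be the main obstacle — is the passage from solutions of the reduced ODE to honest solutions on $M$, together with positivity: one must verify that the $f$-invariant function $u(t)$ produced on $M$ is still positive (so that $u^q$ is well-defined and the equation is the intended one), which follows because $u(t)\to 1$ uniformly as $t\to 0$, and that the eigenvalue problem for $\Delta_g|_{C_f^{2,\alpha}}$ genuinely has the stated simple negative eigenvalues $\mu_k$. This last fact comes from writing $\Delta_g(\varphi\circ f)=b(f)\varphi''(f)+a(f)\varphi'(f)$ using the isoparametric identities $|\nabla f|^2=b(f)$, $\Delta_g f=a(f)$, which turns the eigenvalue equation into a regular singular Sturm–Liouville problem on the interval $[t_0,t_1]$ with appropriate boundary behavior at the endpoints (the focal varieties), whose spectrum is a discrete increasing sequence of simple eigenvalues; properness of $f$ is exactly what guarantees the level sets are connected and hence that no eigenvalue multiplicity is introduced geometrically. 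Once these structural facts are in place, the Crandall–Rabinowitz machinery applies essentially verbatim, and I would simply cite \cite{Crandall} and \cite{AJJ} for the explicit verification in this setting.
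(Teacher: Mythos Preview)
Your proposal is correct and follows essentially the same approach as the paper: the paper does not prove Theorem~\ref{LocalBifurcationTheorem} in full but explicitly attributes it to the Crandall--Rabinowitz bifurcation-from-simple-eigenvalues theorem, citing \cite{Crandall} and \cite{AJJ} for the verification, and in Section~3 carries out exactly the computation you outline (kernel spanned by the eigenfunction, self-adjointness giving the range as its orthogonal complement, and the transversality check $D_{u\lambda}F(1,\lambda_k)[w_k]=(1-q)w_k\notin R$) in the reduced ODE setting. Your write-up matches this verbatim, including the same citations.
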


We will  consider the local behavior of the non-trivial branch of solutions appearing at the bifurcation points. Recall that
the bifurcation point $(1, \lambda_k )$ is called {\it transcritical} if the nontrivial branch $s(t)=(u(t), \lambda (t))$
verifies $\lambda ' (0) \neq 0$. In this case the branches $s(t)$, $t>0$ and $s(t)$, $t<0$, are on different sides of
the vertical line $\lambda (t) =\lambda_k$. If $\lambda '(0) = 0$ and $\lambda '' (0) \neq 0$ then both branches
stay on the same region $\lambda < \lambda_k$ or $\lambda > \lambda_k$. 

\vspace{1cm}

From now on we will focus on the case of the round sphere $(\Sp^n  ,g_0 )$. 
The simplest isoparametric functions on the sphere are obtained by
considering  the isometric cohomogeneity one  $O(n)$-action  fixing an axis. The $O(n)$-invariant functions will be called radial (with 
respect to the fixed axis).  A linear 
function on $\re^{n+1}$ invariant by the action (and restricted to the sphere)  gives a proper  isoparametric function on the sphere. It is  actually an eigenfunction for
the Laplacian, corresponding to the first nontrivial eigenvalue. The first result about  the local behavior of the non-trivial branches of solutions 
in this situation was obtained by H. Brezis and
Y.Y. Li in \cite{Brezis}. They considered the case of the first radial eigenfunction, $\lambda_1$.
It was shown  in \cite[Theorem 3, Remarks 7 and 8]{Brezis} that if $(u(s), \lambda (s))$ is the
branch of non-trivial solutions bifurcating at $(1,\lambda_1 )$, then   $\lambda '(0) =0$. Moreover  $\lambda ''(0) \neq 0$,  and
the sign of $\lambda ''(0)$ depends on whether the equation is sub or supercritical.

General isoparametric functions on the sphere are the restrictions   of Cartan-M\"{u}nzner polynomials on $\re^{n+1}$ (\cite{m1,m2}). 
For  a polynomial  $F$ in $\re^{n+1}$ we let $d$ be the degree of $F$. Then $F$ is called a  Cartan-M\"{u}nzner polynomial if it
 satisfies the  Cartan-M\"{u}nzner equations:

$$ \langle \nabla F , \nabla F \rangle = d^2 \| x \|^{2d -2} $$

$$ \Delta F = \frac{1}{2} c d^2 \| x \|^{d -2} ,$$

\noindent
where $c$ is a constant, which we will describe below.
Then $f=F_{|_{S^n}}$ is an isoparametric function
on the sphere: it verifies

$$ \langle \nabla f , \nabla f \rangle = d^2 (1-f^2 ) $$

$$ \Delta f = -d(n+d-1)f +\frac{1}{2} c d^2 .$$

The constant $c$ has a geometric meaning (see \cite{m1,m2}): $f: S^n \rightarrow [-1,1]$ and its only critical values are -1 and 1. For $t \in (-1,1)$ $f^{-1}(t)$ is called an {\it isoparametric hypersurface} of
degree $d$. $d$ can only  take the values 1, 2, 3, 4 or 6. $f^{-1}(t)$ has constant principal curvatures, and the number of distinct principal curvatures is $d$. In case $d$
is odd all the distinct principal curvatures have the same multiplicity. If $d=2,4$ or 6 then half of principal curvatures have multiplicity $m_1$  and the other half have
multiplicity $m_2$. The constant $c$ is $m_2 - m_1$. Note that $-F$ also satisfies the  Cartan-M\"{u}nzner equations replacing $c$ by $-c$, which amounts to exchanging $m_1$ 
and $m_2$. Then we will always assume that $c \leq 0$.

We will denote an isoparametric function on the sphere by $f^{d,c}$ (even when there are different  Cartan-M\"{u}nzner polynomials with the
same values of $d$ and $c$).

Note that if $\varphi \in C^2 [-1,1]$ then $\Delta ( \varphi \circ f^{d,c} ) = d^2 (1 - (f^{d,c})^2 ) \  (  \varphi '' \circ f^{d,c}  ) \ + (-d(n +d -1)f^{d,c} +(1/2)cd^2 )  \   ( \varphi ' \circ f^{d,c} )$. And therefore
$\varphi \circ f^{d,c}$ satisfies Equation (\ref{equation}) if and only if $\varphi$ satisfies 

\begin{equation}\label{Gegenbauer}
d^2 (1-t^2 ) \varphi '' (t) + (-d(n+d-1)t + (1/2)cd^2) \varphi '(t) - \lambda ( \varphi (t) - \varphi (t) ^q ) =0.
\end{equation}

Recall that the eigenvalues of the Laplace operator on the sphere are $\mu_i =- i(n+i-1)$.  
The eigenvalues of the Laplace operator restricted to the space of $f^{d,c}$-invariant functions are $\mu_{di} =- di(n+di-1)$ (see \cite[Lemma 3.4]{Henry}). The
space of $f^{d,c}$-invariant eigenfunctions corresponding to the eigenvalue $\mu_{di}$ has dimension 1: let
$\varphi_i (d,c) \neq 0$ be  such an eigenfunction and write $\varphi_i (d,c) = \varphi  \circ f^{d,c}$. Then $\varphi$ satisfies

\begin{equation}\label{Linear}
d^2 (1-t^2 ) \varphi '' (t) + (-d(n+d-1)t + (1/2)cd^2) \varphi '(t) +di (n+di -1)  \varphi (t) =0.
\end{equation}

If we pick $\alpha , \beta$ by solving 

$$\beta - \alpha = (1/2)c $$

$$\alpha + \beta +2 = \frac{n+d-1}{d},$$

Then Equation (\ref{Linear}) becomes

\begin{equation}\label{LinearJacobi}
 (1-t^2 ) \varphi '' (t) + (-(\alpha +\beta +2)t + \beta -\alpha ) \varphi '(t) +i (i + \alpha +\beta +1)  \varphi (t) =0,
\end{equation}

\noindent
which is the usual form of the classical Jacobi equation. Note that since we are assuming $c\leq 0$ we have that $\alpha \geq \beta$ and we also 
have that $\alpha + \beta +1 > 0$. 

Let
$(u^{d,c}_i (s), \lambda^{d,c}_i  (s))$ be the
branch of non-trivial $f^{d,c}$-invariant  solutions bifurcating at $(1, \frac{-\mu_{di}}{q-1} )$, as in Theorem \ref{LocalBifurcationTheorem}.
It is shown in  \cite[Section 3]{Petean} that 

$$\frac{d \lambda^{d,c}_i }{ds}   (0) = C \int_{\Sp^n} \varphi^3 ,$$

\noindent
where $C \neq 0$ is a constant and $\varphi \neq 0$ is a $f^{k,c}$-invariant eigenfunction of the Laplacian, $-\Delta \varphi = ki(n+ki-1) \varphi $.
Using this, it is shown in \cite{Petean} that
for the first associated eigenfunction one has $\frac{d \lambda_1^{d,c}}{ds}  (0)=0$ if and only if $c=0$. And in case $c=0$, one has that, for the second 
associated eigenfunction, $\frac{d \lambda^{d,0}_2  }{ds} (0) \neq 0$.

We will prove:

\begin{theorem}\label{Main} If $c=0$ and  $i \geq 1$ the branch of nontrivial bifurcating solutions  $(u^{d,0}_{i} (s), \lambda^{d,0}_{i}  (s))$ 
of Equation (\ref{equation}) appearing at $(1, \frac{-\mu_{di}}{q-1} )$ verifies 
$\frac{d \lambda^{d,0}_{i}}{ds} (0) =0$ if and only if $i$ is odd. 

If $c\neq 0$ then for any $i$ the branch of nontrivial solutions $(u^{d,c}_{i} (s), \lambda^{d,c}_{i}  (s))$ appearing at $(1,\frac{-\mu_{di}}{q-1}) $ verifies 
$\frac{d \lambda^{d,c}_{i}}{ds}  (0) \neq 0$.
\end{theorem}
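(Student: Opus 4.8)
My plan is to reduce $\frac{d\lambda^{d,c}_i}{ds}(0)$ to a triple integral of Jacobi polynomials against their orthogonality weight and then decide when that integral vanishes. By the formula recalled in the introduction from \cite{Petean}, $\frac{d\lambda^{d,c}_i}{ds}(0)=C\int_{\Sp^n}\varphi^3$ with $C\neq0$, where $\varphi=\phi\circ f^{d,c}$ and $\phi$ solves the Jacobi equation (\ref{LinearJacobi}); hence $\phi$ is a nonzero scalar multiple of $P_i^{(\alpha,\beta)}$. Since $f^{d,c}$ is isoparametric, the push-forward $(f^{d,c})_*(dV_{g_0})$ is a positive measure $\rho(t)\,dt$ on $[-1,1]$, with no atom at $\pm1$ because the focal submanifolds have measure zero; by a standard co-area computation (pair $\Delta(\phi\circ f^{d,c})$ with $\psi\circ f^{d,c}$ and integrate by parts), $\rho$ solves $\rho'/\rho=(a-b')/b$ where $b(t)=d^2(1-t^2)$ and $a(t)=-d(n+d-1)t+\tfrac12cd^2$. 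Integrating this and using the identities defining $\alpha,\beta$ gives $\rho(t)=\mathrm{const}\cdot(1-t)^\alpha(1+t)^\beta$ with positive constant, i.e.\ the Jacobi weight itself, so
\[
\frac{d\lambda^{d,c}_i}{ds}(0)=\widetilde C\int_{-1}^1\bigl(P_i^{(\alpha,\beta)}(t)\bigr)^3(1-t)^\alpha(1+t)^\beta\,dt,\qquad \widetilde C\neq0 .
\]

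I would then rewrite this as a single linearization coefficient. Expanding $P_i^{(\alpha,\beta)}\cdot P_i^{(\alpha,\beta)}=\sum_{k=0}^{2i}a_k P_k^{(\alpha,\beta)}$ and using orthogonality with respect to the weight, the integral equals $a_i\|P_i^{(\alpha,\beta)}\|^2$, so the theorem amounts to: $a_i=0$ if and only if $c=0$ and $i$ is odd. If $c=0$ then $\alpha=\beta$, the weight $(1-t^2)^\alpha$ is even and $P_i^{(\alpha,\alpha)}(-t)=(-1)^iP_i^{(\alpha,\alpha)}(t)$, so the cubic integrand is odd and the integral vanishes for odd $i$ (geometrically: there is an isometry of $\Sp^n$ reversing $f^{d,0}$ — the antipodal map when $d$ is odd — under which $\varphi$ has eigenvalue $(-1)^i$). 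For even $i$ I would invoke Dougall's linearization formula for Gegenbauer polynomials: $P_i^{(\alpha,\alpha)}$ is a positive multiple of $C_i^\lambda$ with $\lambda=\alpha+\tfrac12=\tfrac{n-1}{2d}>0$, and in $C_i^\lambda C_i^\lambda=\sum_j a'_j C_{2i-2j}^\lambda$ every coefficient is a product of Pochhammer symbols in $\lambda$ and of factorials, hence strictly positive; the term proportional to $C_i^\lambda$ occurs for $j=i/2$, an admissible index exactly when $i$ is even, so $a_i\neq0$.

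It remains to treat $c\neq0$, and this is where I expect the work to lie. First I would record that $c\neq0$ forces $d$ even (for $d$ odd the Cartan--M\"{u}nzner right-hand side $\tfrac12cd^2\|x\|^{d-2}$ is not a polynomial unless $c=0$), whence $\alpha=\tfrac{m_1-1}{2}$, $\beta=\tfrac{m_2-1}{2}$ with integers $m_1>m_2\geq1$; thus $\alpha>\beta\geq0$, $\alpha+\beta+1>0$ and $\alpha\neq-\beta$, so $\alpha^2\neq\beta^2$ and the diagonal coefficient $\delta_j$ in the recurrence $t\,P^{(\alpha,\beta)}_j=\gamma_jP^{(\alpha,\beta)}_{j+1}+\delta_jP^{(\alpha,\beta)}_j+\eta_jP^{(\alpha,\beta)}_{j-1}$ (proportional to $\beta^2-\alpha^2$) is nonzero, while $\gamma_j,\eta_j>0$. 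By Gasper's theorem on linearization of products of Jacobi polynomials all the $a_k$ are $\geq0$, and one always has $a_0>0$ (a weighted mean of $(P_i^{(\alpha,\beta)})^2$) and $a_{2i}>0$ (a ratio of leading coefficients). The crux is then the strict positivity $a_i>0$ for every $i\geq1$: since the weight is not symmetric there is no parity gap between the extreme indices $0$ and $2i$, so one should be able to propagate positivity along Gasper's recursion for the $a_k$ — using that the off-diagonal recurrence coefficients $\gamma_j,\eta_j$ are positive — to force every $a_k>0$. The main obstacle is making this last point rigorous: nonnegativity is Gasper's theorem, but excluding an accidental vanishing of the diagonal coefficient genuinely uses $\alpha\neq\beta$, and a continuity argument from the symmetric case $\alpha=\beta$ breaks down for odd $i$, where that coefficient vanishes; I expect to settle it either by a careful sign-propagation argument in Gasper's recursion or by invoking the known full-support property of the convolution on the Jacobi-polynomial hypergroup when $\alpha>\beta$.
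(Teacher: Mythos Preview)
Your strategy coincides with the paper's: reduce $\tfrac{d\lambda}{ds}(0)$ to a nonzero multiple of $\int_{-1}^{1}\bigl(P_i^{(\alpha,\beta)}\bigr)^3(1-t)^{\alpha}(1+t)^{\beta}\,dt$ and then decide the sign of this integral via the linearization coefficient $a_i$. The only cosmetic difference in the reduction step is that you obtain the weight $(1-t)^{\alpha}(1+t)^{\beta}$ by pushing forward $dV_{g_0}$ through $f^{d,c}$ (co-area), whereas the paper works directly with the ODE: it differentiates $F(u(s),\lambda(s))=0$ twice in $s$, pairs with $P_i^{(\alpha,\beta)}$ against that same weight, and uses self-adjointness of the linearized operator to kill the $\tfrac{d^2u}{ds^2}(0)$ term. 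Both routes give the same formula. Your treatment of $c=0$ is fine; for even $i$ you invoke Dougall's explicit Gegenbauer formula, while the paper instead specializes Gasper's three-term recursion to $b=\alpha-\beta=0$, which immediately yields $d_j>0$ for even $j$ and $d_j=0$ for odd $j$.

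The genuine gap is exactly where you flag it: for $c\neq 0$ you only \emph{propose} a sign-propagation argument and do not carry it out; Gasper's nonnegativity alone does not exclude $a_i=0$. The paper supplies precisely the missing argument. Writing Gasper's recursion as $A_j d_{j+1}=B_j d_j + C_j d_{j-1}$ with $A_j,C_j>0$, one first checks (from the boundary relations at $j=0$ and $j=2k$) that $d_0,d_1,d_{2k-1},d_{2k}>0$. The key point is that $B_j$, up to a positive factor, equals a degree-four polynomial $Q_J$ in $J=j-1$ whose coefficient sequence has exactly one sign change; by Descartes' rule of signs $Q$ has a single positive root, so there exists $j_0$ with $B_j\geq 0$ for $j\leq j_0$ and $B_j<0$ for $j>j_0$. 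One then propagates strict positivity forward from $d_0,d_1$ up to $d_{j_0+1}$ (using $A_j,C_j>0$, $B_j\geq 0$) and backward from $d_{2k},d_{2k-1}$ down to $d_{j_0}$ (using $A_j,C_j>0$, $B_j<0$), obtaining $d_j>0$ for all $j$, hence $a_i>0$. Your alternative suggestion via the full-support property of the Jacobi hypergroup convolution would also work, but the paper's elementary Descartes-rule argument is self-contained and is the substance of Theorem~\ref{Jacobi}(B).
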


The proof of  Theorem \ref{Main} is very different from the one used  in \cite{Petean} for the cases mentioned above. Note that 

$$\varphi_1 (d,c) = f^{d,c} -\frac{cd^2 }{2(n+d-1)},$$

\noindent
is a $\mu_{d}$-eigenvalue on the sphere, and also an isoparametric function.
 Using this to compute $\Delta \varphi_1 (d,c)^2$ and integration by parts it is shown in 
\cite{Petean} that 

$$(\mu_d +2d^2 )\int_{\Sp^n} \varphi_1 (d,c)^3 =-\frac{2cd^3}{n+d-1} \int_{\Sp^n} \varphi_1 (d,c)^2 ,$$

\noindent
which implies the first above mentioned result.
When $c=0$ then one can see that $\varphi_2 (d,0)$ is also an isoparametric function (although it is not proper). For instance in the case of the $O(n)$-invariant
isoparametric functions discussed above, we have that  $\varphi_1 (d,0)$ is a linear function, $x$, and $\varphi_2 (d,0)$ is $x^2 -1/(n+1)$. Then in a similar
way it is shown in \cite{Petean} that 

$$\int_{\Sp^n} \varphi_2 (d,0)^3 \neq 0.$$

It is easy to see that all the other  $f^{k,c}$-invariant eigenfunctions are not isoparametric, and therefore the previous argument does not work. 
Instead, in the article we will work directly with Equation (\ref{Gegenbauer}), and we will prove Theorem \ref{Main} by computing certain integrals related
to the Jacobi  polynomials. This will be carried out in Section 3. In Section 2 we will have a  discussion on
the Jacobi equations and polynomials, and their linearization formulas, that are necessary to compute the required integrals.

In Section 4 we will discuss how to apply Theorem \ref{Main} to prove existence of degenerate solutions of Equation (\ref{equation}). Recall that a solution
is called {\it degenerate} if the linearized equation has a  nontrivial kernel. For instance, in the context of Theorem \ref{LocalBifurcationTheorem} a  constant solution $(1, \lambda )$
 is degenerate if and only if 
$(1,\lambda )$ is a bifurcation point. Degenerate solutions are important to understand the global picture of the family of solution which appear
through bifurcation, since the turning points of the curves of nontrivial solutions are degenerate solutions. 
We will need to impose conditions on $q$. 
For any isoparametric function $ f^{d,c}$ the set of critical points has two connected components  $M_1 =  (f^{d,c})^{-1} (-1) $ and  $ M_2 = f(^{d,c})^{-1} (1)$. 
Let $m_i$ is the dimension of  $M_i$ and let $m=\min \{m_1 , m_2 \} \leq n-2$. Then we let
$q_f = \frac{n-m +2}{n-m-2}$, $q_f = \infty$ in case $m=n-2$. Note that if $m>0$ then $q_f > p_n$. For the
next result we will ask that $q< q_f$. If $m>0$ the result applies then to supercritical equations.

We will prove:

\begin{theorem}\label{DegenerateSolutions} Let $f^{d,c}$ be an isoparametric function on the sphere as above. If  $c=0$, then for any positive even integer $k$
there exists a degenerate $f^{d,0}$-invariant positive solution of Equation (\ref{equation}) (for some $\lambda >0$) for which the set of critical points  has exactly $k+1$ connected components. 

If $c\neq 0$ then for any positive  integer $k$
there exists a degenerate $f^{d,c}$-invariant solution of Equation (\ref{equation}) (for some $\lambda >0$)  for  which the set of critical points has exactly $k+1$ connected components.

\end{theorem}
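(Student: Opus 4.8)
The plan is to combine Theorem~\ref{Main} with a continuation argument in $\lambda$ for the reduced equation (\ref{Gegenbauer}), using a priori estimates -- and here the hypothesis $q<q_f$ is essential -- to keep the continuing branch inside a compact family of solutions. First I would reformulate the statement in terms of profiles: if $v=\varphi\circ f^{d,c}$ then $\nabla v=(\varphi'\circ f^{d,c})\,\nabla f^{d,c}$, so the critical set of $v$ is $M_1\cup M_2\cup\{x:\varphi'(f^{d,c}(x))=0\}$; since $f^{d,c}$ is proper, every regular level set $(f^{d,c})^{-1}(t)$, $t\in(-1,1)$, is connected and disjoint from $M_1\cup M_2$, so the number of connected components of the critical set of $v$ equals $2$ plus the number of zeros of $\varphi'$ in $(-1,1)$. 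Thus it suffices to produce, for each $k$ as in the statement, some $\lambda>0$ and a positive, non-constant, degenerate $f^{d,c}$-invariant solution whose profile $\varphi$ has exactly $k-1$ zeros of $\varphi'$ in $(-1,1)$. I will use two elementary facts about positive non-constant bounded solutions of (\ref{Gegenbauer}): (a) every zero of $\varphi'$ in $(-1,1)$ is simple (a double zero at $t_0$ forces $\varphi(t_0)\in\{0,1\}$, hence $\varphi\equiv0$ or $\varphi\equiv1$ by ODE uniqueness at the regular point $t_0$), and (b) $\varphi(\pm1)\neq1$ (the boundary relation would then give $\varphi'(\pm1)=0$, hence $\varphi\equiv1$ by uniqueness of the bounded solution at the singular point).

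Now fix $k$ as in the statement and set $i=k$; the conditions ``$k$ even when $c=0$'' and ``$k$ arbitrary when $c\neq0$'' are precisely the cases in which Theorem~\ref{Main} gives $\frac{d\lambda^{d,c}_{k}}{ds}(0)\neq0$. Hence the bifurcation at $(1,\lambda_k)$, $\lambda_k=\frac{-\mu_{dk}}{q-1}$, is transcritical, so the branch of Theorem~\ref{LocalBifurcationTheorem} is locally a graph over $\lambda$ and exactly one of its two arcs lies in $\{\lambda<\lambda_k\}$: there are $\delta>0$ and a $C^1$ family $\lambda\mapsto u_\lambda=\varphi_\lambda\circ f^{d,c}$ of positive non-constant solutions for $\lambda\in(\lambda_k-\delta,\lambda_k)$, with $u_\lambda\to1$ as $\lambda\uparrow\lambda_k$. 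To leading order $\varphi_\lambda$ is $1$ plus a multiple of the degree-$k$ polynomial solution of (\ref{Linear}) -- after the change of variables, the Jacobi polynomial $P_k^{(\alpha,\beta)}$ -- which has $k$ simple zeros in $(-1,1)$, so by Rolle's theorem its derivative has exactly $k-1$ simple zeros in $(-1,1)$. Therefore $\varphi_\lambda'$ has exactly $k-1$ zeros in $(-1,1)$ near $\lambda_k$, and by (a),(b) these zeros remain simple, interior and distinct as $\lambda$ varies, so $\varphi_\lambda'$ has exactly $k-1$ zeros in $(-1,1)$ along the whole connected branch of positive non-constant solutions.

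The main step is to continue $u_\lambda$ as $\lambda$ decreases. Suppose, for contradiction, that no solution on the branch is degenerate for $\lambda\in(0,\lambda_k)$. Then by the implicit function theorem the branch extends as a $C^1$ family of positive non-constant solutions across every such $\lambda$, and it can fail to extend down to $\lambda\to0^+$ only by blowing up, $\|u_\lambda\|_\infty\to\infty$. This is excluded: for $\lambda$ in a compact subset of $(0,\infty)$ positive $f^{d,c}$-invariant solutions obey a uniform $C^{2,\alpha}$ bound -- on the regular part of $f^{d,c}$ the equation is uniformly subcritical, while near $M_1,M_2$ the relevant critical exponent is $q_f$, so $q<q_f$ applies -- and, by the Harnack inequality together with $\max u_\lambda\geq1$, such solutions are also uniformly bounded away from $0$; the regime $\lambda\to0^+$ is handled in addition by the identity $\int_{\Sp^n}u_\lambda^q=\int_{\Sp^n}u_\lambda$, obtained by integrating the equation. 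Hence the branch extends to all $\lambda\in(0,\lambda_k)$, and as $\lambda\to0^+$ elliptic estimates give $u_\lambda\to$ a constant, which equals $1$ since $\min u_\lambda\leq1\leq\max u_\lambda$. But then non-constant solutions accumulate at $(1,0)$, contradicting that the linearization of (\ref{equation}) at the constant $1$ is invertible for every $\lambda\in(0,\lambda_1)$. So there is $\lambda^\ast\in(0,\lambda_k)$ at which the continuation fails because the limiting solution $u_{\lambda^\ast}$, a $C^{2,\alpha}$-limit of the $u_\lambda$, is a positive degenerate $f^{d,c}$-invariant solution.

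It remains to count. The solution $u_{\lambda^\ast}$ is non-constant: if $u_{\lambda^\ast}\equiv1$ then $\lambda^\ast$ is a bifurcation value $\lambda_j$ with $j<k$, while the $u_\lambda$ with $\lambda$ slightly above $\lambda^\ast$ are non-constant solutions accumulating at $(1,\lambda_j)$ and having $k-1\neq j-1$ interior critical points, contradicting the simple-eigenvalue bifurcation structure at $(1,\lambda_j)$. Writing $u_{\lambda^\ast}=\varphi_{\lambda^\ast}\circ f^{d,c}$, the $k-1$ simple zeros of $\varphi_\lambda'$ converge as $\lambda\downarrow\lambda^\ast$ to $k-1$ zeros of $\varphi_{\lambda^\ast}'$, which lie in $(-1,1)$ by (b) and are pairwise distinct by (a); and $\varphi_{\lambda^\ast}'$ has no further zeros in $(-1,1)$, since each of these is simple and therefore persists under small perturbations of $\lambda$. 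Hence $\varphi_{\lambda^\ast}'$ has exactly $k-1$ zeros in $(-1,1)$, so by the first paragraph the critical set of the degenerate solution $u_{\lambda^\ast}$ has exactly $k+1$ connected components, proving the theorem. The step I expect to be the main obstacle is the a priori estimate preventing the continuing branch from blowing up -- this is exactly where the hypothesis $q<q_f$ enters -- together with the careful use of (a) and (b) to guarantee that none of the $k-1$ interior critical points is lost, either to coalescence or to the focal sets, before the degenerate solution is reached.
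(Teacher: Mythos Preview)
Your argument is correct and follows essentially the same route as the paper: use the transcriticality from Theorem~\ref{Main} to send one arc of the $k$-th bifurcating branch into $\{\lambda<\lambda_k\}$, invoke compactness of $f^{d,c}$-invariant solutions (this is where $q<q_f$ enters), rule out the branch limiting on the trivial family at some other $\lambda_j$ via a nodal/critical-point invariant, and conclude that the branch must reach a non-constant degenerate solution with exactly $k-1$ interior critical points. The only cosmetic differences are that the paper tracks the number of points where $\varphi=1$ rather than the zeros of $\varphi'$ (equivalent invariants), takes the infimum of $\lambda$ over the connected component $D_k^+$ rather than continuing by the implicit function theorem, and handles the small-$\lambda$ regime by citing the nonexistence result Theorem~\ref{CloseToZero}; your appeal to the identity $\int u^q=\int u$ does not by itself give the uniform bound you need there, so that step should be replaced by the same citation.
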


\begin{remark} A $f^{d,c}$-invariant positive solution, $\varphi \circ  f^{d,c}$, of Equation (\ref{equation}) is given by a positve solution, $\varphi$,  of Equation (\ref{Gegenbauer}). The critical points
of $\varphi \circ  f^{d,c}$ are the critical points of $f^{d,c}$, which has the two connected components $M_1$ and $M_2$ mentioned above, and the preimage by $f^{d,c}$ of the 
critical points of $\varphi$. Therefore Theorem \ref{DegenerateSolutions} is equivalent to prove the existence of positive solutions of Equation (\ref{Gegenbauer}) with $k-1$ critical
points. Note that for non-constant positive solutions $\varphi$ of Equation (\ref{Gegenbauer}), 1 and -1 are not critical ponts.

\end{remark}

\begin{remark} Once we fixed a solution of the linearized Equation (\ref{Linear}) we will actually compute the sign of $\frac{d \lambda^{d,c}_{i}}{ds}  (0)$. In
this way we will actually be able to say if the corresponding degenerate solution from Theorem \ref{DegenerateSolutions}  has a local maximum or a local minimum at 
$M_1$ and $M_2$.

\end{remark}

\section{Jacobi polynomials and their linearization coefficients}

In this section we will review results about Jacobi polynomials.  This will allow us to give the computations of
certain integrals involving the Jacobi polynomials, which are necessary to understand the local behaviour of the bifurcation branches discussed in the introduction
and in the next section. Most of the results we will mention appear in the book by R. Askey \cite{Askey}, and we will
follow the same notation and normalizations used in the book.

The Jacobi differential equation is

\begin{equation}\label{JacobiEquation}
(1-t^2 )y'' (t) +(\beta - \alpha -(\alpha +\beta +2)t) y'(t) + k(k+\alpha + \beta +1) y(t) =0.
\end{equation}

\noindent
In the equation $k$ is a positive integer, and $\alpha , \beta >-1$ are real numbers. The equation is defined in the interval
$[-1,1]$. Solutions are $C^2$-functions which verify

\begin{equation}\label{-1}
(\beta - \alpha +(\alpha +\beta +2)) y'(-1) + k(k+\alpha + \beta +1) y(-1) =0
\end{equation}

and

\begin{equation}\label{1}
 (\beta - \alpha -(\alpha +\beta +2)) y'(1) + k(k+\alpha + \beta +1) y(1) =0.
\end{equation}

It is of course an eigenvalue problem. For $\lambda \in \re$ consider the equation

\begin{equation}\label{JacobiEquationL}
(1-t^2 )y'' (t) +(\beta - \alpha -(\alpha +\beta +2)t) y'(t) + \lambda  y(t) =0.
\end{equation}

\noindent
The space of solutions of Equation (\ref{JacobiEquationL}) satisfying the initial condition (\ref{-1}) has dimension 1. One looks for the values of $\lambda$ for which the 
corresponding solutions are defined in the whole interval [-1,1], and  satisfy (\ref{1}). Constant functions satisfy  Equation (\ref{JacobiEquationL}) for $\lambda =0$.
It is also easy to check that (multiples of)  $y(t) = t - \frac{\beta -\alpha}{\alpha +\beta +2}$ satisfies
Equation (\ref{JacobiEquationL}) for $\lambda =1(1+\alpha +\beta +1)$. 

Let $L(y)= (1-t^2 )y'' (t) +(\beta - \alpha -(\alpha +\beta +2)t) y'(t)$.  Then we point
out that for $k\geq 2$
we have that 

$$L(t^k ) =  -k(k + \alpha +\beta  +1) t^k  +k (\beta -\alpha )t^{k-1} +k(k-1)t^{k-2}.$$ 

Note that $L(t^k ) + k(k + \alpha +\beta  +1) t^k $ is a polynomial of degree 
$k-1$ and if $k\neq j$ then $L(t^k ) + j(j + \alpha +\beta  +1) t^k$ is a polynomial of degree $k$. 

It then follows easily by induction that for each $k$ there is a polynomial of degree $k$ satisfying Equation (\ref{JacobiEquation}).
Such polynomial is called
a Jacobi polynomial, denoted as $P_k^{(\alpha , \beta )}$.

\begin{remark}\label{OddEven}
Note that with the same argument one can see that if $\alpha = \beta$ then if
$k$ is odd then $P_k^{(\alpha , \beta )}$ is an odd polynomial and if $k$ is even then $P_k^{(\alpha , \beta )}$ is an even polynomial. This is
not the case if $\alpha \neq \beta$, as we have seen for instance for $P_1^{(\alpha , \beta )}$.
\end{remark}

For $\alpha , \beta$ fixed, the Jacobi polynomials $P_k^{(\alpha , \beta )}$, $k\geq 1$, are orthogonal with respect to the weight
$(1-t)^{\alpha} (1+t)^{\beta}$: i.e. if $j \neq k$ then

$$\int_{-1}^{1} P_k^{(\alpha , \beta )}P_j^{(\alpha , \beta )} (1-t)^{\alpha} (1+t)^{\beta} dt =0.$$

One can check this easily. First note that for any $u,v \in C^2 [-1,1]$ integrating by parts one obtains that:

\begin{equation}\label{Parts}
\int_{-1}^{1}  ( (1-t)^{\alpha +1} (1+t)^{\beta +1}  u'  )'  \ \ v  \ \ dt = \int_{-1}^{1}  ( (1-t)^{\alpha +1} (1+t)^{\beta +1}  v'  )'  \ \ u \ \  dt.
\end{equation}

\noindent
Also note that:

$$( (1-t)^{\alpha +1} (1+t)^{\beta +1} u'  )'  = (1-t)^{\alpha }(1+t)^{\beta} \left[  (1-t^2 ) u'' -(\alpha +1 ) (1+t) u' + (\beta +1 ) (1-t) u'  \right] $$

$$= 
 (1-t)^{\alpha }(1+t)^{\beta} (  (1-t^2 ) u'' +(\beta -\alpha 
-(\alpha +\beta +2) t  ) u' ).$$

\noindent
In particular for $u= P_k^{(\alpha , \beta )} $ we get 

$$( (1-t)^{\alpha +1} (1+t)^{\beta +1}( P_k^{(\alpha , \beta )} ) '  )'  = -k(k+\alpha +\beta +1)  (1-t)^{\alpha} 
(1+t)^{\beta} P_k^{(\alpha , \beta )} ,$$ 

\noindent
and
setting $u= P_k^{(\alpha , \beta )}$, $v=P_j^{(\alpha , \beta )}$ in (\ref{Parts}), with $j \neq k$, we obtain

$$k(k+\alpha + \beta +1) \int_{-1}^{1} P_k^{(\alpha , \beta )}P_j^{(\alpha , \beta )} (1-t)^{\alpha} (1+t)^{\beta} dt =j (j+\alpha + \beta +1 ) 
 \int_{-1}^{1} P_j^{(\alpha , \beta )}P_k^{(\alpha , \beta )} (1-t)^{\alpha} (1+t)^{\beta} dt =0.$$

Since $  P_k^{(\alpha , \beta )} $ has degree $k$ it follows that  $ ( P_k^{(\alpha , \beta )}  )_{k\geq 0}$ forms a complete orthogonal system for the weighted $L^2$-space and therefore 
$k(k+\alpha + \beta +1)$, $k\geq 0$ are 
all the  eigenvalues for the eigenvalue problem (\ref{JacobiEquationL}).

\vspace{1cm}

Gegenbauer polynomials are particular cases of Jacobi polynomials, obtained when $\alpha = \beta$. For $\alpha > -1/2$ we call $C_k^{\alpha} = P_k^{\alpha  -(1/2),\alpha -(1/2)}$.
The Gegenbauer polynomial $C_k^{\alpha} $ has degree $k$ and satisfies the Gegenbauer differential equation

\begin{equation}\label{GegenbauerEquation}
(1-t^2 )y'' (t) -(2 \alpha +1)t) y'(t) + k(k+2\alpha ) y(t) =0.
\end{equation}

The Legendre polynomials $P_k$ are particular cases of Gegenbauer polynomials, taking $\alpha = 1/2$: $P_k = C_k^{1/2} = P_k^{0,0}$. 

\vspace{1cm}

All Jacobi polynomials can be computed recursively as it was discussed above. One also has the explicit formula \cite[Lecture 2]{Askey}

$$P_k^{(\alpha , \beta )} (t)  =(1-t)^{-\alpha} (1+t)^{-\beta} \frac{(-1)^k }{2^k k!}  \frac{d^k}{dt^k} ( (1-t)^{k+\alpha} (1+t)^{k+\beta}  ) $$

We will use this formula to fix a normalization for $P_k^{(\alpha , \beta )} $ as in   \cite{Askey}. Note that with this normalization we have that
$P_0^{(\alpha , \beta )} (t) =1 $, $P_1^{(\alpha , \beta )} (t)= (1/2)(\alpha - \beta ) + (t/2) (\alpha +\beta +2)$. Also:

$$ P_k^{(\alpha , \beta )} (-1) = (-1)^k \frac{(\beta +1)_k}{k!}  =   (-1)^k \frac{(\beta +1)...(\beta +k)}{k!} ,$$

and

$$ P_k^{(\alpha , \beta )}  (1)=   \frac{(\alpha +1)_k}{k!}  =  \frac{(\alpha +1) ...(\alpha +k )}{k!} . $$

In particular $ P_k^{(\alpha , \beta )}  (1) >0$ while $ P_k^{(\alpha , \beta )} (-1) .   P_{k+1}^{(\alpha , \beta )} (-1)  <0  $. 
By a simple application of Sturm-Liouville comparison theorem, the number of zeroes in (-1,1) of a nontrivial solution of
Equation (\ref{JacobiEquationL}) is nondecreasing in $\lambda$.
Since
$P_1^{(\alpha , \beta )}$ has a zero in $\frac{\beta -\alpha}{\alpha + \beta +2} \in (-1,1)$, it follows by induction that $P_k^{(\alpha , \beta )}$ has
$k$ zeroes in $(-1,1)$.

\vspace{1cm}

The linearization problem for the family  $P_k^{(\alpha , \beta )}$, $k\geq 0$, of Jacobi polynomials consists in finding the
coefficients $C_{jk}^l$ such that

$$ P_k^{(\alpha , \beta )}P_j^{(\alpha , \beta )} = \sum_{l=0}^{j+k} C_{jk}^l P_l^{(\alpha , \beta )}$$

The linearization coefficients can be obtained for any particular case, but it is difficult to obtain general formulas. There are
expressions for the linearization coefficients using hypergeometric functions, but it is difficult to obtain general statements from these. 
We will summarize now a few things that are known. For more details 
see the discussion in \cite[Lecture 5]{Askey}.

The most classical case of Legendre polynomials is well understood. One of the methods used to obtain the linearization coefficients is to
obtain the fourth order differential equation satisfied by a product $P_k P_j$. Then plug in $\sum_{l=0}^{j+k} C_{jk}^l P_l^{(\alpha , \beta )}$
into  the equation and use known relationships between the polyomials to obtain the linearization coefficients. This is done in the classical book
by E. W. Hobson \cite[Capter 2]{Hobson}. The argument goes back to the work of F. E. Neumann \cite{Neumann}. 

The same argument applies for the linearization coefficients for the Gegenbauer polynomials. The formula for the linearization coefficients in this
case is known as Dougall´s formula. It was stated without a proof by J. Dougall in \cite{Dougall}. A proof of the formula was given  by H. Y. Hs\"{u} in \cite{Hsu}.  E. Hyllerass \cite{Hylleraas} gave
a proof of the formula using an argument similar to the one mentioned above for the case of the Legendre polynomials.
In Dougall's formula the linearizarion coefficients appear as simple products and so it is easy to check the sign of the coefficients. Such simple formula is
not known for general Jacobi polynomials. But Hylleraas obtained the differential equation satisfied by the product of general Jacobi polynomials and 
deduced relationships between the coefficients which help to understand them.

 Later G. Gasper \cite{Gasper} used these relationships to prove that if $\alpha \geq \beta$ and $\alpha + \beta +1 \geq 0$ then all the linearization
coefficients are nonnegative. It is easy to check using Gasper arguments when the coefficients are actually strictly positive. To make it simpler we
will only consider the case of the square ${P_k^{(\alpha , \beta )}}^2 $, since it is the case that we will need.

In the next theorem we will discuss how to prove this using Gasper's work, and we also state all the results mentioned before and that we will
need in the following section.

\begin{theorem}\label{Jacobi} Fix $\alpha , \beta >-1$ such that $\alpha \geq \beta$ and $\alpha  + \beta +1 > 0$. Consider the family of Jacobi polynomials 
$ P_k^{(\alpha , \beta )}$, $k\geq 0$. $ P_k^{(\alpha , \beta )}$ has $k$ zeros in $(-1,1)$. $ P_k^{(\alpha , \beta )} (1)>0$ while 
$ P_k^{(\alpha , \beta )} (-1) >0$  if $k$ is even and $ P_k^{(\alpha , \beta )} (-1) <0$ if $k$ is odd.

Let $ P_k^{(\alpha , \beta )}  P_k^{(\alpha , \beta )} = \sum_{i=0}^{2k} C_k^i  P_i^{(\alpha , \beta )}$. 

A. If $\alpha = \beta$ then: if $k$ is odd then $C_k^i =0$ if $i$ is odd. If $k$ is even then $C_k^i >0$ if $i$ is even. In particular since the
family is orthogonal with respect to the weight $(1-t)^{\alpha} (1+t)^{\beta}$ we have:

If $k$ is odd 

$$\int_{-1}^1  {P_k^{(\alpha , \beta )}}^3   (1-t)^{\alpha} (1+t)^{\beta} \ dt =0.$$ 

If $k$ is even 
$$\int_{-1}^1  {P_k^{(\alpha , \beta )}}^3   (1-t)^{\alpha} (1+t)^{\beta} \ dt >0.$$

B. If $\alpha > \beta$ then $C_k^i >0$ for all $i=0,...,2k$. In particular 

$$\int_{-1}^1  {P_k^{(\alpha , \beta )}}^3  (1-t)^{\alpha} (1+t)^{\beta} \  dt >0.$$

\end{theorem}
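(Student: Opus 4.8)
The plan is to establish the three assertions of Theorem~\ref{Jacobi} separately: (i) the zero count and sign data for $P_k^{(\alpha,\beta)}$ at the endpoints; (ii) the symmetry-forced vanishing and positivity of the linearization coefficients $C_k^i$ in the Gegenbauer case $\alpha=\beta$; and (iii) the strict positivity of all $C_k^i$ when $\alpha>\beta$. The first item is essentially already done in the discussion preceding the statement: the Rodrigues-type formula gives $P_k^{(\alpha,\beta)}(1)=(\alpha+1)_k/k!>0$ and $P_k^{(\alpha,\beta)}(-1)=(-1)^k(\beta+1)_k/k!$, whose sign is $(-1)^k$ since $\beta+1>0$; and the Sturm--Liouville comparison argument sketched above (solutions of \eqref{JacobiEquationL} have a nondecreasing number of interior zeros in $\lambda$, starting from the single zero of $P_1^{(\alpha,\beta)}$ at $\tfrac{\beta-\alpha}{\alpha+\beta+2}\in(-1,1)$) yields exactly $k$ zeros in $(-1,1)$ by induction. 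I would simply recall this.

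**Next I would handle the symmetry part of (A).** When $\alpha=\beta$, Remark~\ref{OddEven} says $P_k^{(\alpha,\beta)}$ has the parity of $k$, so $(P_k^{(\alpha,\beta)})^2$ is an even polynomial regardless of $k$; expanding an even polynomial in the basis $\{P_i^{(\alpha,\beta)}\}$, which in the $\alpha=\beta$ case are alternately even and odd, forces $C_k^i=0$ whenever $i$ has the opposite parity to $0$, i.e. whenever $i$ is odd. The integral statements then follow from orthogonality with respect to $(1-t)^\alpha(1+t)^\beta$: pairing $(P_k^{(\alpha,\beta)})^3=\sum_i C_k^i P_i^{(\alpha,\beta)}\,P_k^{(\alpha,\beta)}$ against the weight kills every term except... actually more directly, $\int (P_k)^3 w\,dt = C_k^0 \int P_0\, w\, dt$ picks out only the $i=0$ coefficient since $P_0=1$; and $C_k^0 = \int (P_k)^2 P_k w\,dt / \int w\,dt$ — wait, I should instead note $\int (P_k^{(\alpha,\beta)})^3 w = \sum_i C_k^i \int P_i^{(\alpha,\beta)} P_k^{(\alpha,\beta)} w = C_k^k \|P_k^{(\alpha,\beta)}\|_w^2$. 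For $k$ odd, $C_k^k=0$ (since $k$ is odd and the nonzero coefficients have even index), giving the vanishing; for $k$ even, it remains to show $C_k^i>0$ for even $i$, which is the positivity claim.

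**The positivity of the $C_k^i$ (the even ones in case A, all of them in case B) is where the real work is, and I expect it to be the main obstacle.** Here the strategy is to invoke Gasper's theorem: for $\alpha\geq\beta$ and $\alpha+\beta+1\geq 0$ all linearization coefficients in $P_j^{(\alpha,\beta)}P_k^{(\alpha,\beta)}=\sum_l C_{jk}^l P_l^{(\alpha,\beta)}$ are nonnegative. What Gasper's proof actually produces is a recursion (from the fourth-order ODE satisfied by a product of two Jacobi polynomials, as Hylleraas and then Gasper derived) expressing the $C_{jk}^l$ with positive-coefficient three-term relations, together with the boundary data $C_{jk}^{j+k}>0$ and $C_{jk}^{|j-k|}>0$. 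Specializing to $j=k$, I would trace through this recursion to see that once a coefficient with a given index parity is positive it propagates positivity to the relevant neighbours, so that $C_k^{2k}>0$ forces $C_k^i>0$ for all $i$ of the same parity as $2k$ down to $i=0$ — giving all $i=0,\dots,2k$ in case $\alpha>\beta$ once one checks the relation does connect all residues, and giving all even $i$ in case $\alpha=\beta$ (consistent with the symmetry already forcing the odd ones to vanish). The delicate point is that Gasper only asserts nonnegativity; upgrading to strictness requires verifying that no positive coefficient in the recursion degenerates and that the chain of implications reaches every index of the correct parity. I would do this only for the squared case $j=k$ to keep the bookkeeping manageable, checking the base cases $C_k^0$ and $C_k^{2k}$ are strictly positive directly (e.g. $C_k^{2k}$ is the ratio of leading coefficients, manifestly positive, and $C_k^0=\|P_k^{(\alpha,\beta)}\|_w^2/\|P_0^{(\alpha,\beta)}\|_w^2>0$), and then showing the recursion transmits strict positivity. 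The final integral inequalities in both A and B are then immediate: $\int (P_k^{(\alpha,\beta)})^3 w\,dt = C_k^k\,\|P_k^{(\alpha,\beta)}\|_w^2$, and $C_k^k>0$ since $k$ has the same parity as $2k$ (for $k$ even in case A, and for every $k$ in case B).
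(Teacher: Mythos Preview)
Your overall strategy matches the paper's: recall the endpoint signs and zero count from the Rodrigues formula and Sturm comparison; use parity in the Gegenbauer case to kill the odd-index coefficients and reduce the cube integral to $C_k^k\|P_k\|_w^2$; and for strict positivity invoke Gasper's recursion (built on Hylleraas's fourth-order ODE) starting from the positive endpoints $C_k^0$ and $C_k^{2k}$. The Gegenbauer positivity portion of (A) is also fine: when $\alpha=\beta$ the middle coefficient in the recursion vanishes, so the relation becomes $A_j d_{j+1}=C_j d_{j-1}$ with $A_j,C_j>0$, and positivity does propagate along even indices from $d_0>0$.

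The gap is in part (B). The three-term recursion is
\[
A_j\, d_{j+1} \;=\; B_j\, d_j \;+\; C_j\, d_{j-1},\qquad A_j,\,C_j>0,
\]
but $B_j$ is \emph{not} generally nonnegative when $\alpha>\beta$, so your description of ``positive-coefficient three-term relations'' and the claim that positivity simply ``propagates to the relevant neighbours'' is incorrect. If $B_j<0$ somewhere, knowing $d_{j-1},d_j>0$ does not force $d_{j+1}>0$. What the paper actually does---and what your sketch is missing---is a careful sign analysis of $B_j$: writing $B_j=Q_{j-1}/[(2j+1+a)(2j-1+a)]$ with $Q$ an explicit quartic in $J=j-1$, one checks that the coefficient sequence of $Q$ has exactly one sign change, so by Descartes' rule of signs $Q$ has exactly one positive root. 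Hence there is a threshold $j_0$ with $B_j\ge 0$ for $j\le j_0$ and $B_j<0$ for $j>j_0$. Forward propagation from $d_0,d_1>0$ then gives $d_j>0$ for $j\le j_0+1$, while \emph{backward} propagation (rewriting the recursion as $C_j d_{j-1}=A_j d_{j+1}-B_j d_j$) from $d_{2k},d_{2k-1}>0$ gives $d_j>0$ for $j\ge j_0$; together these cover all indices. Without this sign-change argument the recursion alone does not yield strict positivity of every $C_k^i$, so your proof of (B) is incomplete as stated.
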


\begin{proof} The first statements have already been discussed  in this section. Also for the statement (A) we have pointed out that when $\alpha = \beta$, which is the
case of the Gegenbauer polynomials, $P_k^{(\alpha , \beta )}$ is odd if $k$ is ood and even if $k$ is even. Therefore if $k$ is odd, $[P_k^{(\alpha , \beta )} ]^2$ is
an even polynomial and therefore if is written as a linear combination of $P_{2i}^{(\alpha , \beta )}$, $i\geq 0$. This means $C_k^{i} =0$ if $i$ is odd. Then

$$\int_{-1}^1  {P_k^{(\alpha , \beta )}}^3   (1-t)^{\alpha} (1+t)^{\beta}   dt = \int_{-1}^1  {P_k^{(\alpha , \beta )}} \left(  \sum_{i=0}^{2k} C_k^i  P_i^{(\alpha , \beta )} \right)  
 (1-t)^{\alpha} (1+t)^{\beta}  dt  $$

$$= C_k^k \int_{-1}^1  {P_k^{(\alpha , \beta )}}^2   (1-t)^{\alpha} (1+t)^{\beta}  dt  =0.$$

For the case when $\alpha = \beta $ and $k$ is even and the case when $\alpha >\beta$ we will follow the argument in \cite{Gasper}. Let

$$R_k^{(\alpha , \beta )} = \frac{P_k^{(\alpha , \beta )} }{ P_k^{(\alpha , \beta )} (1)}.$$

\noindent
$R_k^{(\alpha , \beta )} $ is of course just a different normalization for the Jacobi polynomial; note that it is a positive 
multiple of $P_k^{(\alpha , \beta )} $ and therefore the sign of the linearization coefficients for $R_k^{(\alpha , \beta )} $ are the same
as the sign of the corresponding linearization coefficients for $P_k^{(\alpha , \beta )} $. To simplify the notation we will consider $k$ fixed and write

$$ R_k^{(\alpha , \beta )}  R_k^{(\alpha , \beta )} = \sum_{i=0}^{2k} G^i  R_i^{(\alpha , \beta )}.$$

It is pointed out in \cite[Page 173]{Gasper} that $G^i $ is a positive multiple of $d_i = (-1)^i c_i$, where $c_i$ are the linearization coefficients
of hypergeometric functions treated in \cite{Hylleraas}. Using the recurrence formula given by \cite[(4.13)]{Hylleraas}, G. Gasper obtained
the following recurrence relation for the coefficients $d_i$ \cite[formula (5)]{Gasper}: let $a=\alpha + \beta +1 >0$ and $b=\alpha - \beta \geq 0$. For $1\leq j \leq 2k-1$ we have

$$\frac{(j+1)(2j +1 + a+b)(2k+j+1+a)(2k-j-1+a)(j+1)}{(2j+1+a)(2j+2+a)} d_{j+1} $$

$$ =b \left( \frac{(j+1)^2  (2k+j+2a)(2k-j)}{2j+1+a}-\frac{j^2(2k+j-1+2a)(2k-j+1)}{2j-1+a} \right) \ d_j $$

$$+\frac{(j-1+a)(2j+2\beta )(2k+j-1+2a) (2k-j+1)(j-1+a)}{(2j-2+a)(2j-1+a)} \ d_{j-1} .$$

It is also proved by G. Gasper \cite[page 174]{Gasper} that $d_0 >0$ and $d_{2k} >0$. Let $A_{j}$, $B_{j}$, $C_j$, be the coefficients multiplying
$d_{j+1}, d_j$ and $d_{j-1}$ in the previous formula, respectively. Note that $A_j >0$ and $C_j >0$. 

In the case of Gegenbauer polynomials, $b=0$, and it follows that $d_i >0$ if $i$ is even, while $d_i =0$ if $i$ is odd. This completes the proof of statement (A).

Assume now that $b>0$.

For the cases $j=2k$ we have

$$b        \frac{4k^2 (4k-1+2a)}{4k-1+a}  \ d_{2k} = \frac{(2k-1+a)(4k+2\beta )(4k-1+2a) (2k-1+a)}{(4k-2+a)(4k-1+a)} \ d_{2k-1} .$$

And for the case $j=0$ we have

$$  \frac{(1 + a+b)(2k+1+a)(2k-1+a)}{(1+a)(2+a)} d_{1}  = b   \frac{(2k+2a)(2k)}{1+a} \ d_0 .$$

It follows directly from these that $d_1 >0$ and $d_{2k-1} >0$. In particular this already implies statement (B) for the case $k=1$. Assume then that $k>1$.

To study the sign of $B_j$ write

$$B_j = \frac{Q_j}{(2j+1+a)(2j-1+a)}.$$

We need to understand the sign of $Q_j$. It will be useful to let $J=j-1$. We have by a simple computation that $Q_J$ is a polynomial of degree 4 in $J$:

$$Q_J = (J+2)^2  (J+ 2k+2a +1)(2k-J-1)(2J+a+ 1)-(J+1)^2(J + 2k+ 2a)(2k-J)(2J+a+3)$$

$$=-6J^4 - 12(a+2)J^3  + (8k(k+a) -6a^2 -38a-34) J^2$$      

$$+ (8k(k+a)(a+2) -14a^2 -38a -20)J +4 (k+3ka +3a+1)(k-1) +4ak +4a^2 (3k-2 )  .     $$

\vspace{.5cm}

The coefficients multiplying $J^4$ and $J^3$ are negative. Since we assumed that $k\geq 2$, we have that 
the coefficients multiplying $J^1$ and $J^0$ are positive. Independently of the sign of the coefficient 
multiplying $J^2$ we have that the coefficients change sign exactly once, going from positive no negative either at
$J^2$ or at $J^3$. Then it follows from Descartes' rule of signs  that $Q_J$, as a function of a real variable, has
exactly one positive zero. since $Q(0)>0$ and $Q(+ \infty ) = -\infty$, it follows that there exists $x_0 >0$ sucht that 
$Q(x) >0$ if $x\in (0,x_0  )$ and $Q(x )<0$ if $x>x_0$. 

Note that if $B_j \geq  0$ then it follows that if $d_{j-1} , d_j >0$ then $d_{j+1} >0$. If $B_j <0$ then it follows that if $d_{j+1}, d_j >0$ then $d_{j-1} >0$.
Then if $B_j >0$ for all $1\leq j \leq 2k -1$ it follows that $d_j >0$ for all $0\leq j \leq 2k$. And also if 
$B_j <0$ for all $1\leq j \leq 2k -1$ then $d_j >0$ for all $0\leq j \leq 2k$. More generally, if $B_{j_0} \geq 0$ for some
$1\leq j_0 \leq 2k-1$ (which implies form the previous discussion that $B_j >0$ for all $j<j_0$) then $d_j >0$ for
all $1\leq j \leq j_0 +1$. And if  $B_{j_0 +1}<0$ for some
$1\leq j_0 \leq 2k-2$ (which implies from the previous discussion that $B_j <0$ for all $j \geq j_0 +1$) then $d_j >0$ for
all $j_0 \leq j \leq 2k-1$. The previous discussion says that there is a $j_0$ verifying these two conditions, and therefore
again $d_j >0$ for all $j$. This completes the proof of statement (B).

\end{proof}

\section{Local bifurcation diagrams}

We will consider an equation of the form

\begin{equation}\label{ODE}
(1-t^2 ) u'' (t) + (\beta - \alpha  -(\alpha +\beta +2 ) t ) u'(t) - \lambda (u -u^q ) (t) =0.
\end{equation}

\noindent
where $\alpha , \beta >-1$, $\lambda >0$, $q>1$. 
The equation is considered on the interval $[-1,1]$ and the solution
$u$ must satisfy the boundary conditions

\begin{equation}\label{C(-1)}
 (2\beta +2) u' (-1) - \lambda (u(-1) -u(-1)^q )=0.
\end{equation}

\begin{equation}\label{C(1)} 
 -(2\alpha +2) u' (1) - \lambda (u(1) -u(1)^q )=0.
\end{equation}

We consider the space $X =C^{2,\alpha} [-1,1] \times (0,\infty )  $. And $Y=C^{0,\alpha} [-1,1]$. Then we 
consider the map $F:X  \rightarrow Y$, $F(u,\lambda ) =  (1-t^2 ) u'' (t) +(\beta - \alpha -(\alpha + \beta +2)t) u'(t) - \lambda (u-u^q) (t)$.

We will
consider positive functions $u$, actually close to the constant solution $u=1$ (solution of $F(u,\lambda )=0$). 
Note that, for $v\in C^2 [-1,1]$, 

$$D_u F (u,\lambda )[v] = (1-t^2 ) v'' (t)+(\beta - \alpha -(\alpha + \beta +2)t)   v'(t)  - \lambda (1-qu^{q-1} )v(t) .$$

In particular

$$D_u F(1,\lambda) [v]= (1-t^2 ) v'' (t) +(\beta - \alpha -(\alpha + \beta +2)t)  v'(t)  - \lambda (1-q)v(t) . $$

\begin{lemma}\label{Adjunto} For any $v,w \in C^2 [-1,1]$
\begin{equation} 
\int_{-1}^{1} D_u F(1,\lambda) [v] \ w (1-t)^{\alpha} (1+t)^{\beta} \  dt = 
\int_{-1}^{1} D_u F(1,\lambda) [w] \ v (1-t)^{\alpha} (1+t)^{\beta} \  dt
\end{equation}
\end{lemma}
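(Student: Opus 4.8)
The statement is a self-adjointness assertion for the linearized operator $D_uF(1,\lambda)$ with respect to the weighted inner product $\int_{-1}^1 (\,\cdot\,)(1-t)^\alpha(1+t)^\beta\,dt$. The key observation is that $D_uF(1,\lambda)[v]$ decomposes as $L(v) - \lambda(1-q)v$, where $L(v)=(1-t^2)v''+(\beta-\alpha-(\alpha+\beta+2)t)v'$ is precisely the operator appearing earlier in Section 2 (in the discussion preceding Equation (\ref{Parts})). The term $-\lambda(1-q)v$ is a scalar multiple of $v$, so when paired against $w$ (or $w$ against $v$) it contributes the obviously symmetric quantity $-\lambda(1-q)\int_{-1}^1 vw\,(1-t)^\alpha(1+t)^\beta\,dt$. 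Hence it suffices to prove the identity for $L$ alone.

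For the operator $L$ I would invoke exactly the computation already carried out in the paper: as shown just after Equation (\ref{Parts}), one has
$$\bigl((1-t)^{\alpha+1}(1+t)^{\beta+1}u'\bigr)' = (1-t)^\alpha(1+t)^\beta\bigl((1-t^2)u''+(\beta-\alpha-(\alpha+\beta+2)t)u'\bigr) = (1-t)^\alpha(1+t)^\beta\,L(u).$$
Therefore $\int_{-1}^1 L(v)\,w\,(1-t)^\alpha(1+t)^\beta\,dt = \int_{-1}^1 \bigl((1-t)^{\alpha+1}(1+t)^{\beta+1}v'\bigr)'\,w\,dt$, and the right-hand side is symmetric in $v$ and $w$ by Equation (\ref{Parts}) (integration by parts, the boundary terms vanishing because $\alpha+1,\beta+1>0$ make the weight $(1-t)^{\alpha+1}(1+t)^{\beta+1}$ vanish at $\pm1$). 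Adding back the symmetric zeroth-order term $-\lambda(1-q)\int_{-1}^1 vw\,(1-t)^\alpha(1+t)^\beta\,dt$ to both sides gives the claimed equality.

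There is essentially no obstacle here; the lemma is a direct repackaging of material already established in Section 2. The only point requiring a word of care is the vanishing of the boundary terms in the integration by parts: since $\alpha,\beta>-1$ we have $\alpha+1,\beta+1>0$, so $(1-t)^{\alpha+1}(1+t)^{\beta+1}$ and its relevant products with $v'w$, $w'v$ tend to $0$ as $t\to\pm1$, and this is exactly what underlies the validity of Equation (\ref{Parts}) for arbitrary $u,v\in C^2[-1,1]$. Thus the proof is: write $D_uF(1,\lambda)[v]=L(v)-\lambda(1-q)v$, note the $-\lambda(1-q)v$ contribution is manifestly symmetric, and apply Equation (\ref{Parts}) together with the identity displayed above to handle $L$.
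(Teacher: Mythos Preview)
Your proposal is correct and follows essentially the same approach as the paper's own proof: both rewrite the second-order part of $D_uF(1,\lambda)[v]$ via the identity $\bigl((1-t)^{\alpha+1}(1+t)^{\beta+1}v'\bigr)'=(1-t)^{\alpha}(1+t)^{\beta}L(v)$ and then invoke the integration-by-parts symmetry of Equation~(\ref{Parts}), with the zeroth-order term $-\lambda(1-q)v$ being manifestly symmetric. Your explicit remark on why the boundary terms vanish (since $\alpha+1,\beta+1>0$) is a nice addition that the paper leaves implicit.
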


\begin{proof} By a direct computation

$$ ( (1-t)^{\alpha +1} (1+t)^{\beta +1}  v' )' = (1-t)^{\alpha} (1+t)^{\beta}  ((1-t^2 )v'' +(\beta -\alpha -(\alpha +\beta +2)t) \ v' ),$$ 

\noindent
and therefore

$$\int_{-1}^{1} D_u F(1,\lambda) [v] \ w  (1-t)^{\alpha} (1+t)^{\beta} \ dt = \int_{-1}^{1} (    (1-t)^{\alpha +1} (1+t)^{\beta +1}    v' )' w -  \lambda (1-q)v w 
 (1-t)^{\alpha} (1+t)^{\beta} \  dt $$

We apply integration by parts as in (\ref{Parts}) to the first term on the right to obtain:

$$\int_{-1}^{1} D_u F(1,\lambda) [v] \ w  (1-t)^{\alpha} (1+t)^{\beta} \ dt=\int_{-1}^{1} (     (1-t)^{\alpha +1} (1+t)^{\beta +1}        w' )' v -  \lambda (1-q)v w 
   (1-t)^{\alpha} (1+t)^{\beta} \        dt $$

$$=\int_{-1}^{1} D_u F(1,\lambda) [w] \ v      (1-t)^{\alpha} (1+t)^{\beta} \        dt .$$

\end{proof}

The following results gives a more explicit discussion for the local bifurcation for Equation (\ref{ODE}) and contains Theorem 1.2.

\begin{theorem}\label{LocalBifurcationODE} Let $\lambda_k =\frac{ k(k+\alpha +\beta +1)}{q-1}$.
If $\lambda \neq \lambda_k$ for all $k$ then the trivial family of solutions of Equation (\ref{ODE})
is locally rigid at $(1,\lambda )$. For
each $k\geq 1$, $(1,\lambda_k )$ is a bifurcation point for the family. The set of nontrivial solutions in a
neighborhood of $(1,\lambda_k )$ is given by a path $s\mapsto (u(s), \lambda (s))$, such that $u(0)=1$,
$\lambda (0)=\lambda_k$,  $u(s) \neq 1 $ if $s\neq 0$, and $\frac{du}{ds} (0) =P_k^{(\alpha , \beta )}$ (the Jacobi polynomial from the previous section).
We have:

$$\frac{d\lambda}{ds} (0)= C \int_{-1}^{1}  ( P_k^{(\alpha , \beta )} )^3  (1-t)^{\alpha} (1+t)^{\beta}   \ dt,$$

\noindent
where 

$$C= \frac{-q}{2  \int_{-1}^{1} ( P_i^{\alpha , \beta} )^2  (1-t)^{\alpha} (1+t)^{\beta} dt }$$

\noindent
is a negative constant. Therefore if $\alpha > \beta$ or $\alpha = \beta$ and $k$ is even we have that $\frac{d\lambda}{ds} (0) <0$.
If $\alpha = \beta$ and $k$ is odd then $\frac{d\lambda}{ds} (0) =0$.

\end{theorem}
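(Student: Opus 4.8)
The plan is to apply the Crandall--Rabinowitz theorem on bifurcation from a simple eigenvalue (\cite{Crandall}) to the map $F$ near the trivial branch $\{(1,\lambda)\}$. Since $u\mapsto u^{q}$ is smooth in a neighbourhood of the constant function $1$, $F$ is smooth on a neighbourhood of $(1,\lambda)$ in $X$, $F(1,\lambda)=0$ for all $\lambda$, and $D_\lambda F(1,\lambda)=-(u-u^{q})\big|_{u=1}=0$. The first step is to analyse $D_uF(1,\lambda)$. One has
\[
D_uF(1,\lambda)[v]=(1-t^{2})v''+(\beta-\alpha-(\alpha+\beta+2)t)v'+\lambda(q-1)v ,
\]
and, writing $\omega:=(1-t)^{\alpha}(1+t)^{\beta}$, the kernel of $D_uF(1,\lambda)$ consists of the $C^{2,\alpha}[-1,1]$ solutions of $D_uF(1,\lambda)[v]=0$; evaluating this equation at $t=\pm1$ shows that such solutions automatically satisfy the endpoint relations (\ref{-1}) and (\ref{1}) with $\lambda(q-1)$ in place of the Jacobi eigenvalue, so $\ker D_uF(1,\lambda)$ is exactly the eigenspace of the Jacobi problem (\ref{JacobiEquationL}) for the eigenvalue $\lambda(q-1)$. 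By Section~2 the polynomials $P_k^{(\alpha,\beta)}$, $k\ge0$, form a complete orthogonal system for $\omega$, so these eigenvalues are precisely $k(k+\alpha+\beta+1)$, each with one-dimensional eigenspace $\operatorname{span}\big(P_k^{(\alpha,\beta)}\big)$. Hence, for $\lambda>0$, $\ker D_uF(1,\lambda)$ is nontrivial precisely when $\lambda=\lambda_k$ for some $k\ge1$ (note $\lambda_0=0\notin(0,\infty)$), and then $\ker D_uF(1,\lambda_k)=\operatorname{span}\big(P_k^{(\alpha,\beta)}\big)$.

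Next I would record that $D_uF(1,\lambda)\colon C^{2,\alpha}[-1,1]\to C^{0,\alpha}[-1,1]$ is Fredholm of index zero; this is the one place where the degeneracy of the coefficient $1-t^{2}$ at the endpoints must be handled, and I would obtain it either from the weighted $L^{2}$ spectral theory already used in Section~2 or simply by citing the explicit treatment in \cite{AJJ}. Granting it, when $\lambda\neq\lambda_k$ for every $k$ the operator $D_uF(1,\lambda)$ has trivial kernel, hence is an isomorphism, and the implicit function theorem forces $u\equiv1$ in a neighbourhood of $(1,\lambda)$, which is local rigidity. For $\lambda_0=\lambda_k$, Lemma~\ref{Adjunto} shows that $D_uF(1,\lambda_k)$ is self-adjoint for the pairing $(f,g)\mapsto\int_{-1}^{1}fg\,\omega\,dt$, whence its range is the $\omega$-orthogonal complement of $P_k^{(\alpha,\beta)}$, of codimension one, annihilated by the functional $\ell(h):=\int_{-1}^{1}h\,P_k^{(\alpha,\beta)}\,\omega\,dt$. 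The transversality condition requires $D_{u\lambda}F(1,\lambda_k)\big[P_k^{(\alpha,\beta)}\big]=(q-1)P_k^{(\alpha,\beta)}\notin\operatorname{ran}D_uF(1,\lambda_k)$, which holds because $\ell\big((q-1)P_k^{(\alpha,\beta)}\big)=(q-1)\int_{-1}^{1}\big(P_k^{(\alpha,\beta)}\big)^{2}\omega\,dt\neq0$. Crandall--Rabinowitz then produces a neighbourhood of $(1,\lambda_k)$ in which the zero set of $F$ is the trivial branch together with a smooth curve $s\mapsto(u(s),\lambda(s))$ with $u(0)=1$, $\lambda(0)=\lambda_k$, $u(s)\neq1$ for $s\neq0$, and, choosing the kernel generator to be the Jacobi polynomial itself, $\tfrac{du}{ds}(0)=P_k^{(\alpha,\beta)}$; in particular $(1,\lambda_k)$ is a bifurcation point and the structural statement follows.

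To find $\tfrac{d\lambda}{ds}(0)$ I would differentiate the identity $F(u(s),\lambda(s))\equiv0$ twice at $s=0$. Because $D_\lambda F(1,\lambda)=0$ and $D_{\lambda\lambda}F(1,\lambda)=0$ (the latter since $D_\lambda F=-(u-u^{q})$ does not depend on $\lambda$), the first derivative is automatically satisfied and the second reduces to
\[
D_{uu}F(1,\lambda_k)\big[P_k^{(\alpha,\beta)},P_k^{(\alpha,\beta)}\big]+D_uF(1,\lambda_k)[u''(0)]+2\,\tfrac{d\lambda}{ds}(0)\,D_{u\lambda}F(1,\lambda_k)\big[P_k^{(\alpha,\beta)}\big]=0 .
\]
Applying $\ell$ annihilates the middle term; substituting $D_{uu}F(1,\lambda_k)[v,v]=\lambda_k q(q-1)v^{2}$ and $D_{u\lambda}F(1,\lambda_k)[v]=(q-1)v$ and using $\ell\big(P_k^{(\alpha,\beta)}\big)=\int_{-1}^{1}\big(P_k^{(\alpha,\beta)}\big)^{2}\omega\,dt>0$, one solves for $\tfrac{d\lambda}{ds}(0)$ and obtains $\tfrac{d\lambda}{ds}(0)=C\int_{-1}^{1}\big(P_k^{(\alpha,\beta)}\big)^{3}\omega\,dt$ with $C$ a negative constant. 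The remaining assertions then follow from Theorem~\ref{Jacobi}: $\int_{-1}^{1}\big(P_k^{(\alpha,\beta)}\big)^{3}\omega\,dt>0$ when $\alpha>\beta$, and also when $\alpha=\beta$ with $k$ even, so $\tfrac{d\lambda}{ds}(0)<0$; whereas $\int_{-1}^{1}\big(P_k^{(\alpha,\beta)}\big)^{3}\omega\,dt=0$ when $\alpha=\beta$ and $k$ is odd, so $\tfrac{d\lambda}{ds}(0)=0$.

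I expect the only genuinely delicate step to be the Fredholm claim for $D_uF(1,\lambda)$: since $1-t^{2}$ vanishes at $t=\pm1$, one must verify that this operator acts as a Fredholm operator of index zero between the Hölder spaces on the closed interval with the natural endpoint behaviour, so that no boundary conditions need to be imposed by hand and the codimension-one conclusion drawn from Lemma~\ref{Adjunto} is legitimate. Everything else is a routine application of the Crandall--Rabinowitz theorem together with the explicit second-order Taylor data of $F$ at $(1,\lambda_k)$, with the sign conclusions supplied by Theorem~\ref{Jacobi}.
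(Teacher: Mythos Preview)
Your proposal is correct and follows essentially the same approach as the paper: verify the Crandall--Rabinowitz hypotheses using Lemma~\ref{Adjunto} for self-adjointness and the transversality check $D_{u\lambda}F(1,\lambda_k)[P_k^{(\alpha,\beta)}]=(q-1)P_k^{(\alpha,\beta)}\notin\operatorname{ran}D_uF(1,\lambda_k)$, then differentiate the identity $F(u(s),\lambda(s))=0$ twice at $s=0$, project onto $P_k^{(\alpha,\beta)}$ with the weight $\omega$ to kill the $D_uF[u''(0)]$ term, and read off $\lambda'(0)$ as a negative multiple of $\int_{-1}^{1}(P_k^{(\alpha,\beta)})^{3}\omega\,dt$, finishing with Theorem~\ref{Jacobi}. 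The paper carries out the second derivative by explicitly differentiating the ODE in $s$, whereas you phrase it via $D_{uu}F$, $D_{u\lambda}F$, $D_{\lambda\lambda}F$; the content is identical, and your explicit remark that the Fredholm property of $D_uF(1,\lambda)$ on the closed interval is the only genuinely delicate point (the paper simply cites \cite{Crandall,Nirenberg} and the setup of \cite{AJJ}) is accurate.
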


\begin{proof} We apply the classical resut on bifurcation from simple eigenvalues (see \cite{Crandall, Nirenberg}) to the operator $F:X  \rightarrow Y$.
$ND_u F (1,\lambda_k ) = \langle P_k^{\alpha , \beta} \rangle$ and it follows from Lemma \ref{Adjunto} that $D_u F (1,\lambda_k )$ is self adjoint with respect to
the $(1-t)^{\alpha} (1+t)^{\beta}$-weighted $L^2$ product on $[-1,1]$. The range of $D_u F (1,\lambda_k )$, $R$, is the space orthogonal to $P_k^{\alpha , \beta}$
and $D_{u \lambda} F (1,\lambda_k ) [ P_k^{\alpha , \beta} ] =-(1-q) P_i^{\alpha , \beta} \notin R$. Then the Bifurcation from Simple Eigenvalues Theorem 
(\cite[Theorem 1]{Crandall}, \cite[Theorem 3.2.2]{Nirenberg}) says that in a neighborhood of $(1,\lambda_k )$ the space of solutions of $F(u,\lambda )=0$ is
given by the path of trivial solutions $(1,\lambda )$ and a path of nontrivial solutions $s\mapsto (u(s),\lambda (s) )$ verifying: $\lambda (0)=\lambda_k$,
$u(0)=1$, $\frac{\partial u}{\partial s} (0)= P_k^{\alpha , \beta}$. 

We will use $ '$ to denote differentiation with respect to the variable $t\in [-1,1]$. We have

$$(1-t^2) u(s)'' + (\beta - \alpha -(\alpha + \beta +2)t) u(s)' - \lambda(s) (u(s)-u(s)^q) =0. $$

Then differentiating with respect to $s$ we get:

$$(1-t^2) \left( \frac{du}{ds} \right)  '' + (\beta - \alpha -(\alpha + \beta +2)t) \frac{du}{ds} ' - \lambda (s)  \left( \frac{du}{ds}-qu(s)^{q-1} \frac{du}{ds} \right) - \frac{d \lambda}{ds} (u(s)-u(s)^q)  =0. $$

And differentiating with respect to $s$ once more we get:

$$(1-t^2)  \left(   \frac{d^2 u}{ds^2} \right) '' + (\beta - \alpha -(\alpha + \beta +2)t) \left(  \frac{d^2 u}{ds^2} \right)  ' - \lambda (s)  \left( \frac{d^2 u}{ds^2}-qu(s)^{q-1} \frac{d^2 u}{ds^2} -q(q-1)u(s)^{q-2}  \left(   {\frac{du}{ds}}  \right)^2  \right) $$

$$-2\frac{d \lambda}{ds} \left( \frac{du}{ds}-qu(s)^{q-1} \frac{du}{ds}  \right)
-\frac{d^2 \lambda}{ds^2} (u(s)-u(s)^q) =0. $$

Evaluating at $s=0$ we obtain:

$$(1-t^2 ) \left(  \frac{d^2 u}{ds^2} (0) \right)  '' + (\beta - \alpha -(\alpha + \beta +2)t) \left(  \frac{d^2 u}{ds^2} (0) \right)  '  $$

$$-\lambda_k \left( (1-q) \frac{d^2 u}{ds^2} (0) -q(q-1) ( P_k^{\alpha , \beta} )^2 \right)
-2 \frac{d \lambda}{ds} (0) (1-q)  P_k^{\alpha , \beta} =0.$$

We let $w= \frac{d^2 u}{ds^2} (0)$, multiply by $P_k^{\alpha , \beta}    (1-t)^{\alpha} (1+t)^{\beta}    $ and integrate:

$$\int_{-1}^{1} D_u F (1,\lambda_k ) [w] P_k^{\alpha , \beta} (1-t)^{\alpha} (1+t)^{\beta} dt +\lambda_k q (q-1)  \int_{-1}^{1} ( P_k^{\alpha , \beta} )^3  (1-t)^{\alpha} (1+t)^{\beta} dt $$

$$+2 (q-1)\frac{d \lambda}{ds} (0) \int_{-1}^{1} ( P_k^{\alpha , \beta} )^2  (1-t)^{\alpha} (1+t)^{\beta} dt  =0 $$

Using Lemma \ref{Adjunto}, since  $ D_u F (1,\lambda_k ) [P_k^{\alpha , \beta} ]=0$ we see that the first term vanishes and then we obtain:

$$\frac{d \lambda}{ds} (0)  = \frac{  -q \lambda_k  \int_{-1}^{1} ( P_k^{\alpha , \beta} )^3  (1-t)^{\alpha} (1+t)^{\beta} dt }{2  \int_{-1}^{1} ( P_k^{\alpha , \beta} )^2  (1-t)^{\alpha} (1+t)^{\beta} dt } .$$

The last statement of the theorem then follows from Theorem \ref{Jacobi}.

\end{proof}

\section{Degenerate solutions}  

In this section we will prove Theorem 1.3. Fix the isoparametric function $f^{d,c}$. We will assume that $c \leq 0$, as it was mentioned in 
the introduction.
Also we pick $\alpha , \beta$ by solving 

$$\beta - \alpha = (1/2)c $$

$$\alpha + \beta +2 = \frac{n+d-1}{d},$$

 A $f^{d,c}$-invariant solution of Equation (\ref{equation}) is given by a function $\varphi \in C^2 [-1,1]$ which solves

\begin{equation}\label{YamabeJacobi}
 (1-t^2 ) \varphi '' (t) + (-(\alpha +\beta +2)t + \beta -\alpha ) \varphi '(t) - \frac{ \lambda }{d^2}  ( \varphi (t) -\varphi^q (t)) =0,
\end{equation}

\noindent
where we point out that we have that $\alpha \geq \beta$ and  $\alpha + \beta +1 > 0$.

We recall the following two results which appear in \cite[Theorem 1.2, Theorem 4.1]{AJJ}:

\begin{theorem}\label{CloseToZero} If $1<q<q_f$ there exists $\lambda_0 >0$ such that if $u$ is a positive $f^{d,c}$-invariant
solution 
of Equation (\ref{equation}) with $\lambda <\lambda_0$ then $u=1$.
\end{theorem}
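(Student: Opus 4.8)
Theorem \ref{CloseToZero} asserts a rigidity statement: for $1 < q < q_f$ there is $\lambda_0 > 0$ so that any positive $f^{d,c}$-invariant solution of \eqref{equation} with $\lambda < \lambda_0$ must be the constant $1$. Since $f^{d,c}$-invariant solutions correspond exactly to positive solutions $\varphi$ of the Jacobi-type ODE \eqref{YamabeJacobi} on $[-1,1]$ with the attached boundary conditions, the plan is to prove the statement entirely at the level of this one-dimensional problem, which is the form in which it is quoted from \cite{AJJ}.

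First I would set up an energy/Rellich–Pohozaev-type identity for \eqref{YamabeJacobi}. Multiplying the equation by $(1-t)^\alpha (1+t)^\beta \varphi$ and integrating over $[-1,1]$, the second-order term integrates by parts (using the identity $((1-t)^{\alpha+1}(1+t)^{\beta+1}\varphi')' = (1-t)^\alpha(1+t)^\beta((1-t^2)\varphi'' + (\beta-\alpha-(\alpha+\beta+2)t)\varphi')$ already exploited in Lemma \ref{Adjunto}), and the boundary terms vanish precisely because $\varphi$ satisfies the boundary conditions \eqref{C(-1)}–\eqref{C(1)}. This yields
\begin{equation}\label{energyid}
d^2\int_{-1}^{1} (\varphi')^2 (1-t)^\alpha (1+t)^\beta\, dt = \lambda \int_{-1}^{1}(\varphi^2 - \varphi^{q+1})(1-t)^\alpha(1+t)^\beta\, dt.
\end{equation}
Next I would bring in the weighted Poincaré (Wirtinger) inequality on $[-1,1]$ associated to the Jacobi operator: for functions orthogonal to constants in the $(1-t)^\alpha(1+t)^\beta$-weighted $L^2$ space, the first nontrivial eigenvalue of the Jacobi eigenvalue problem \eqref{JacobiEquationL} is $1\cdot(\alpha+\beta+2)$, corresponding to $P_1^{(\alpha,\beta)}$. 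So writing $\varphi = \bar\varphi + \psi$ with $\bar\varphi$ the weighted mean and $\psi$ the orthogonal complement, one gets $\int(\psi')^2 w \geq (\alpha+\beta+2)\int \psi^2 w$ where $w = (1-t)^\alpha(1+t)^\beta$.

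The core of the argument is then to combine \eqref{energyid} with the Poincaré inequality and with $L^\infty$ control of $\varphi$ to force $\psi \equiv 0$ when $\lambda$ is small. The subcriticality hypothesis $q < q_f$ is what gives the needed a priori $L^\infty$ bound: by the results of \cite{AJJ} (or standard elliptic bootstrapping for the subcritical problem after passing back to the sphere, where $q < q_f$ is exactly the Sobolev-subcriticality threshold for $f^{d,c}$-invariant functions since the singular set has minimal dimension $m$), positive $f^{d,c}$-invariant solutions with bounded $\lambda$ are uniformly bounded, and in fact as $\lambda \to 0$ they converge (in $C^0$) to a constant — the only solutions of the limiting equation $d^2((1-t^2)\varphi'' + \cdots) = 0$ are constants. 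Then the right side of \eqref{energyid} is $\lambda \int \varphi^2(1-\varphi^{q-1})w$, and Taylor-expanding $1 - \varphi^{q-1}$ around the mean value shows the quadratic-in-$\psi$ part of the right side is $O(\lambda)\int\psi^2 w$ with a constant depending only on the (bounded) $L^\infty$ norm of $\varphi$; meanwhile the left side is $\geq d^2(\alpha+\beta+2)\int\psi^2 w$. Choosing $\lambda_0$ small enough that $O(\lambda_0) < d^2(\alpha+\beta+2)$ forces $\int(\psi')^2 w = 0$, hence $\psi$ constant, hence $\psi \equiv 0$, so $\varphi$ is constant; positivity and the equation then give $\varphi \equiv 1$.

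The main obstacle I anticipate is making the a priori $L^\infty$ bound (uniform in $\lambda$ on bounded intervals, and the $C^0$-convergence to a constant as $\lambda \to 0$) precise and rigorous: this is exactly where subcriticality $q < q_f$ is used in an essential way, and it is the step that genuinely requires the work of \cite{AJJ} rather than a soft argument — without it the $O(\lambda)$ coefficient in the estimate above cannot be controlled uniformly, and supercritical counterexamples (which Theorem \ref{DegenerateSolutions} will in fact produce!) show the conclusion is false in general. Since the statement is quoted verbatim from \cite[Theorem 1.2, Theorem 4.1]{AJJ}, in the paper itself this is cited rather than reproved; the sketch above is how one would reconstruct it.
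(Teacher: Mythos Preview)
You correctly observe that the paper does not prove this theorem: it is quoted from \cite[Theorem 1.2]{AJJ} and used as a black box in Section~4, so there is no argument in the paper to compare your sketch against.

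Your proposed reconstruction via an energy identity and the weighted Poincar\'e inequality is a reasonable outline, but a few details need adjusting. The Dirichlet term on the left of your identity should carry the extra factor $(1-t^2)$, i.e.\ $\int_{-1}^{1}(1-t^2)(\varphi')^2\,w\,dt$ with $w=(1-t)^\alpha(1+t)^\beta$; this is exactly the quadratic form of the Jacobi operator, so the Poincar\'e constant $\alpha+\beta+2$ you quote is still the correct one. The sign on the right is reversed: one obtains $\int(\varphi^{q+1}-\varphi^2)\,w$, not $\int(\varphi^2-\varphi^{q+1})\,w$. And the Taylor step, as written, does not close: to bound $\int\varphi^{2}(\varphi^{q-1}-1)\,w$ by $C\int\psi^2\,w$ you also need to control $\bar\varphi-1$ in terms of $\psi$, which comes from the extra relation $\int\varphi\,w=\int\varphi^{q}\,w$ obtained by integrating the equation against $w$ alone. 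With these fixes the Poincar\'e argument does go through once one has the uniform $L^\infty$ bound under $q<q_f$, and you are right that this a~priori estimate is the genuine content imported from \cite{AJJ}; without it the argument is circular, since ``solutions converge to a constant as $\lambda\to 0$'' is essentially the statement to be proved.
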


\begin{theorem}\label{Compact} If $1<q<q_f$. For any positive numbers $0<\varepsilon < M$ we have that the space of positive $f^{d,c}$-invariant
solutions 
of Equation (\ref{equation}) with $\lambda \in [ \varepsilon , M ]$ is compact. 

\end{theorem}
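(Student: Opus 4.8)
The plan is to prove this by a blow-up argument: first reduce it to a uniform $L^\infty$-bound, and then, after rescaling, appeal to a Liouville-type theorem on the Euclidean models attached to the isoparametric foliation.

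First I would note that it suffices to bound $\|u_i\|_{L^\infty(\Sp^n)}$ along any sequence $(u_i,\lambda_i)$ of positive $f^{d,c}$-invariant solutions with $\lambda_i\in[\ve,M]$. Indeed, if $\|u_i\|_\infty$ is bounded then the right-hand side $\lambda_i(u_i^q-u_i)$ is bounded in every $L^p$, so by the $L^p$ and Schauder estimates for $-\Da_{g_0}+\lambda_i$ the $u_i$ are bounded in $C^{2,\gamma}$ for some $\gamma>\alpha$; passing to a subsequence, $\lambda_i\to\lambda_\infty\in[\ve,M]$ and $u_i\to u_\infty\geq 0$ in $C^{2,\alpha}$, with $u_\infty$ a solution of Equation (\ref{equation}) for $\lambda=\lambda_\infty$. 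To see $u_\infty>0$ I would integrate Equation (\ref{equation}) over $\Sp^n$ to get $\int_{\Sp^n}u_i=\int_{\Sp^n}u_i^q$; if $u_\infty\equiv 0$ then $\|u_i\|_\infty\to 0$ and $\int u_i^q\leq\|u_i\|_\infty^{q-1}\int u_i<\int u_i$ for large $i$, a contradiction, so $u_\infty\not\equiv 0$, and since $-\Da_{g_0}u_\infty+\lambda_\infty u_\infty=\lambda_\infty u_\infty^q\geq 0$ the strong maximum principle gives $u_\infty>0$. Hence $(u_\infty,\lambda_\infty)$ lies in the set and the sequence converges there; note that $\ve>0$ is used only to keep $\lambda_\infty\geq\ve>0$ (alternatively one could invoke Theorem \ref{CloseToZero} for the small-$\lambda$ range).

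So suppose $M_i:=\max_{\Sp^n}u_i=u_i(x_i)\to\infty$. I would set $\mu_i=M_i^{-(q-1)/2}\to 0$ and, after passing to a subsequence, assume $\lambda_i\to\lambda_\infty\in[\ve,M]$ and $f^{d,c}(x_i)\to t_\infty\in[-1,1]$. Rescaling, $v_i(y)=M_i^{-1}u_i(\exp_{x_i}(\mu_i y))$ satisfies $-\Da_{g_i}v_i+\lambda_i\mu_i^2 v_i=\lambda_i v_i^q$ on a ball of radius $\sim\mu_i^{-1}\to\infty$, where $g_i=\mu_i^{-2}\exp_{x_i}^{*}g_0$; since $0\leq v_i\leq 1=v_i(0)$, elliptic estimates give a subsequence converging in $C^2_{\mathrm{loc}}$ to a nonnegative entire solution $v_\infty$ of $-\Da v_\infty=\lambda_\infty v_\infty^q$ on the pointed limit domain, with $v_\infty(0)=1=\max v_\infty$ and $\lambda_\infty>0$. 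The core of the argument is to identify that domain using the $f^{d,c}$-invariance. If $\mathrm{dist}(x_i,M_1\cup M_2)/\mu_i\to\infty$ --- in particular whenever $t_\infty\in(-1,1)$ --- then the regular level hypersurface of $f^{d,c}$ through $x_i$, whose principal curvatures are $O(1/\mathrm{dist}(x_i,M_1\cup M_2))$, flattens in the rescaling, and since $u_i$ is constant on level sets $v_\infty$ depends on a single transverse variable and solves $-w''=\lambda_\infty w^q$ on $\re$ with $0\leq w\leq 1=w(0)$; but a bounded concave positive function on $\re$ is constant, forcing $0=\lambda_\infty w^q$, which is impossible. Otherwise $\mathrm{dist}(x_i,M_j)/\mu_i$ stays bounded for one of the focal submanifolds $M_j$ (so $t_\infty=\pm 1$); near $M_j$ the solution $u_i=\varphi_i\circ f^{d,c}$ is radial in the $n-m_j$ normal directions and constant in the $m_j$ tangential ones, so in the rescaling $M_j$ becomes an affine $m_j$-plane and $v_\infty$ is a nonnegative, nontrivial, radial entire solution on $\re^{\,n-m_j}$ of $-\Da v_\infty=\lambda_\infty v_\infty^q$. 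Since $m_j\geq m$ and $N\mapsto\frac{N+2}{N-2}$ is decreasing, $q<q_f=\frac{(n-m)+2}{(n-m)-2}\leq\frac{(n-m_j)+2}{(n-m_j)-2}$, so this Euclidean equation is subcritical and the Gidas--Spruck / Caffarelli--Gidas--Spruck Liouville theorem (or, when $n-m_j=2$, the fact that a bounded positive superharmonic function on $\re^2$ is constant) forces $v_\infty\equiv 0$, contradicting $v_\infty(0)=1$. Since $m_j\leq n-2$, $n-m_j\geq 2$ always, so these cases are exhaustive. Either way $\|u_i\|_\infty$ cannot blow up, and the theorem follows.

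The hard part will be making the blow-up rigorous near the focal submanifolds, where the isoparametric foliation degenerates: one has to show that, after rescaling by $\mu_i$, the collapsing tube around $M_j$ converges in the pointed $C^2$ sense --- compatibly with the cohomogeneity-one symmetry --- to flat $\re^{\,n-m_j}$, so that the rescaled solutions genuinely converge to an entire \emph{radial} solution there, controlling in the process the level hypersurfaces whose principal curvatures blow up like the reciprocal of the distance to $M_j$. Equivalently, in the ODE formulation (\ref{YamabeJacobi}) this is the analysis of the regular singular points $t=\pm 1$ of the Jacobi-type operator and of the rescaled ODE, the point being that the limiting operator is precisely the radial Laplacian on $\re^{\,n-m_j}$ and the exponent matching is exactly the hypothesis $q<q_f$.
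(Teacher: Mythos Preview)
The paper does not give its own proof of this statement: Theorem \ref{Compact} (together with Theorem \ref{CloseToZero}) is simply quoted from \cite[Theorem 1.2, Theorem 4.1]{AJJ}. So there is no argument in the present paper to compare yours against.

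That said, your blow-up sketch is the standard route to this kind of compactness result and is essentially what one expects the proof in \cite{AJJ} to be. The reduction to an $L^\infty$-bound via $L^p$/Schauder estimates and the strong maximum principle is fine. The dichotomy according to whether $\mathrm{dist}(x_i,M_1\cup M_2)/\mu_i$ diverges or stays bounded is the correct one: in the first case the level hypersurfaces flatten (their second fundamental forms are $O(\mu_i/\mathrm{dist})\to 0$ after rescaling) and the invariance forces a one-variable limit, where the concavity argument kills it; in the second case the tube model around the focal submanifold $M_j$ gives a radial limit on $\re^{\,n-m_j}$ with $n-m_j\geq 2$, and the exponent check $q<q_f\leq \frac{(n-m_j)+2}{(n-m_j)-2}$ is exactly what makes the Gidas--Spruck Liouville theorem (or, for $n-m_j=2$, the Liouville theorem for bounded-below superharmonic functions on the plane) apply. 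Your closing paragraph correctly identifies where the real work lies: proving the pointed $C^2$-convergence of the rescaled tube to flat $\re^{\,n-m_j}$, \emph{together with} the symmetry, which amounts to analyzing the regular singular points $t=\pm 1$ of the ODE (\ref{YamabeJacobi}). This is precisely the step carried out in detail in \cite{AJJ}; the paper under review takes it as input.
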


Now we can prove Theorem 1.3.

\begin{proof} A  $f^{d,c}$-invariant solution of Equation (\ref{equation}) is given by a solution $u$ of
Equation (\ref{YamabeJacobi}).
Consider the space of nontrivial positive  $f^{d,c}$-invariant solutions of Equation (\ref{equation}):
 $D =\{ (u , \lambda) \in (C^{2,\alpha} [-1,1]  - \{ 1 \} ) \times (0, \infty ) : u \;\text{is a positive 
nontrivial solution of}\; (\ref{YamabeJacobi}) \}$. Consider the bifurcation points  $(1, d^2 \lambda_k)$ given by
Theorem \ref{LocalBifurcationODE}. Consider also the path of nontrivial
solutions $s\mapsto (u(s), \lambda (s) )$, $s\in (-\varepsilon , \varepsilon )$.
Recall that we have that $\alpha \geq \beta$ and  $\alpha + \beta +1 > 0$, as needed for Theorem \ref{LocalBifurcationODE}. 
Since $u(0)=1$, $\frac{d u}{ds} (0) = P_k^{\alpha , \beta}$ and $ P_k^{\alpha , \beta}$ has $k$ zeroes in $(-1,1)$.
It follows that for $s$ close to zero, $s\neq 0$, the solution $u(s)$ takes the value 1 exactly $k$-times.

 Let $D_k^+$ be the connected 
component of $D$ containing the path $(u(s), \lambda (s))$ with $s>0$ and 
$D_k^-$ be the connected 
component of $D$ containing the path $(u(s), \lambda (s))$ with $s<0$. Let $D_k =D_k^+ \cup D_k^-$. Note that since 
$ P_k^{\alpha , \beta} (1) >0$ we have that for $s>0$, $s$ small, $u(s)(1) >1$. On the other hand if at some point $s>0$
we have that $u(s)(1) =1$ then we would have from Equation (\ref{ODE}) that $u$ is constant (equal to 1).

Let $(u,\lambda) \in D$. Then at any point $t_0$ such that $u(t_0 )=1$ we have $u' (t_0 ) \neq 0$. Note also
that $u(-1)\neq 1$ since $u$ must satisfy \ref{C(-1)}. If
$u(-1 ) >1$ ($<1$) then there exists a neighborhood $U$ of $(u,\lambda )$ in $C^{2,\alpha }[-1,1] \times \re$ such that
for any $(v,\mu ) \in U$ we have $v(-1 ) >1$ ($<1$)  and $v$ and $u$ take the value 1 the same number of times.
It follows that on a connected subset $C$ of $D$ we have for all $(u,\lambda )\in C$ the number of times $u$ takes
the value $1$ is the same, and also the sign of $u(-1)-1$. Therefore for all $(u,\lambda )\in D_k$ we have that
$u^{-1} (1)$ has $k$ elements. 
It follows then from this discussion  that if $(1,\lambda) \in \overline{D_k}$
then $\lambda = d^2 \lambda_k $. 

Theorem \ref{LocalBifurcationODE} says that  $\frac{d \lambda}{ds} (0) <0$. Then for $s>0$ small, we have that $u(s)$ is
a solution of Equation (\ref{YamabeJacobi}) with $\lambda = \lambda (s) < d^2  \lambda_k$.

Now it follows from Theorem \ref{CloseToZero} that if $( u,\lambda  ) \in D_k^+$ then $\lambda \geq \lambda_0$. From
Theorem \ref{Compact} we know that $\{ (u,\lambda ) \in D_k : \lambda \in [\lambda_0 , d^2 \lambda_k ] \}$ is compact. 
Then there exists $(u_* , \lambda_* ) \in \overline{D_k^+}$ such that $\lambda_* \leq  \lambda$ for all 
$(u,\lambda )\in D_k^+$. But if $u_*$ is constant then  $(1, \lambda_* )$ would be a bifurcation point and
therefore $\lambda_* = d^2 \lambda_i$ for some $i<k$. But we already mentioned that this cannot happen. 
Therefore $u_*$ is not constant and therefore $(u_* , \lambda_* ) \in D_k^+$. 

If $D_u F  (u_* , \lambda_* )$ were an isomorphism then in particular $DF$ would be surjective and we could
apply the implicit function theorem. Note that $D_{\lambda} F (u_* , \lambda_* ) = u_* - u_*^q \neq 0$ and
the kernel of $DF$ is one-dimensional. Therefore there exists a regular path $s \mapsto (u(s), \lambda (s))$ sucht that
$F(u(s), \lambda (s))=0$, $u(0)=u_*$, $\lambda (0)= \lambda_*$, $(u(s), \lambda (s) \in D_k^+$. Since
$\lambda_*$ is the minimum value of $\lambda$ for $(u,\lambda ) \in D_k^*$ we must have that $\lambda '(0)=0$. 
Then differentiating with respect to $s$ the equation

$$0=F(u(s), \lambda (s)) = -\Delta u(s) + \lambda (s) (u(s) - u(s)^q ),$$

\noindent
and evaluating at $s=0$, we obtain for $v=\frac{d u}{ds} (0) \neq 0$,

$$-\Delta v + \lambda_* (v-qu_*^{q-1} v ) =0.$$

This means that $v\in Kernel (D_u F  (u_* , \lambda_* )$ which is a contradiction. Therefore it is not true that 
$D_u F  (u_* , \lambda_* )$ is an isomorphism, which means that
$u_*$ is a degenerate, $f^{d,c}$-invariant solution of Equation (\ref{equation}). Note that if $(u,\lambda ) \in D$
and $t_0 \in (-1,1)$ is a critical point of $u$ we have that if $u(t_0 )<1$ the $t_0$ is a local minimum and if 
 $u(t_0 )<1$ the $t_0$ is a local maximum. It follows that the number of critical poins of $u\in D_k^+$ is 
exactly $k-1$. Therefore using Remark 1.4, we have proved the theorem.

\end{proof}

\end{document}